\documentclass[openany, 11pt]{article}
\usepackage[utf8]{inputenc}
\usepackage{amsmath,amscd,amssymb,amsfonts,amsthm,courier,relsize,bm,yhmath}
\usepackage{hyperref,enumerate,mathrsfs,mathtools,slashed}
\usepackage{blindtext}
\usepackage{titlesec}
\usepackage{tikz-cd}
\tikzcdset{ampersand replacement=\&}
\usepackage{xcolor}  
\hypersetup{
    colorlinks,
    linkcolor={red!50!black},
    citecolor={blue!70!black},
    urlcolor={blue!80!black}
}

\textwidth15.8 cm
\oddsidemargin.4cm
\evensidemargin.4cm

\newtheorem*{theorem*}{Theorem}
\newtheorem{theorem}{Theorem}[section]

\newtheorem{proposition}[theorem]{Proposition}

\theoremstyle{definition}    

\theoremstyle{remark}

\newtheorem*{example*}{\textbf{Example}}


\newcommand{\ignore}[1]{}

\def\la{\ensuremath{\langle}}
\def\ra{\ensuremath{\rangle}}
\def\lp{\ensuremath{\left(}}
\def\rp{\ensuremath{\right)}}




\def\F{\ensuremath{\mathcal{F}}}

\def\H{\ensuremath{\mathcal{H}}}

\def\Sc{\ensuremath{\mathscr{S}}}


\def\cal{\ensuremath{\boldsymbol{\circlearrowleft}}}

\def\bC{\ensuremath{\mathbb{C}}}
\def\bH{\ensuremath{\mathbb{H}}}
\def\bR{\ensuremath{\mathbb{R}}}
\def\bZ{\ensuremath{\mathbb{Z}}}

\def\bK{\ensuremath{\mathbb{K}}}

\def\bS{\ensuremath{\mathbb{S}}}
\def\bO{\ensuremath{\mathbb{O}}}

\def\End{\ensuremath{\textnormal{End}}}
\def\Hom{\ensuremath{\textnormal{Hom}}}


\def\dim{\ensuremath{\textnormal{dim}}}


\title{Explicit Families of Spinor Representations}
\author{Jesus Sanchez Jr}
\date{
    Mathematics Department, Washington University in St Louis\\[2ex]
    \today
}

\begin{document}
\maketitle
\begin{abstract}
    We provide a recipe for building explicit representations of the real Clifford algebras once an explicit family is given in dimensions $1$ through $4$. We further give an explicit construction of spin coordinate systems for a given real spinor module and use it to explicitly compute the parallel transport of spinor fields. We further highlight some novelties such as the relationship with the spectrum of the spinor Dirac operator and the Hodge de Rham operator when a parallel spinor field exists and a brief discussion of spinors along a hypersurface in $\bR^4$. Lastly, we extend our construction to arbitrary signature quadratic forms thus providing a complete and explicit family of spinor representations for all mixed signature Clifford algberas. We show that in all cases the spinor representations can be expressed as tensor products of multi-vectors over the fields $\bR$, $\bC$, and $\bH$.
\end{abstract}
{
  \hypersetup{linkcolor=black}
  \tableofcontents
}
\section{Introduction}
Given a real quadratic vector space $(V,g)$, the representation theory of both the real and complexified Clifford algebra is well understood and completely classified by realizing the corresponding Clifford algebra as a matrix algebra or direct sum of two matrix algebras. The definition of spinor module $S$ then becomes that of an irreducible module over the Clifford algebra, real or complex depending on the choice of real or complexified Clifford algebra. In the case of the complexified Clifford algebra $\bC\ell(V,g)$, there is a natural choice of spinor representation where one takes $S:=\wedge_{\bC}(V,J)$ for a chosen complex structure $J$ on $V$ when $V$ is even dimensional and $S:=\wedge_{\bC}(V\oplus\bR,J)$ when $V$ is odd dimensional. This representation can be used to show that any complex manifold is $Spin^c$ and is important in the discussion of the \emph{Hirzebruch-Riemann-Roch} theorem in complex geometry. If one restricts to the case of Euclidean Clifford algebras then there is a satisfactory answer which involves the use of octonions as in \cite{E} and \cite{RB}.  However, the real side suffers in that there are is no know unified treatment for the families of irreducible representations of $C\ell(V,g)$ which are explicit and geometric as in the case of the complexified Clifford algebras \emph{and} valid over all possible signatures. \par
There has been recent interest in understanding the geometry of spinors as well as generalizing the concept to new domains. The work of \cite{DER} presents a new take by defining the notion of $d$-blades with a view towards understanding the role of spinors in quantum field theory, whereas the work of \cite{L} presents an approach to spinor fields on the infinite dimensional loop space of a Riemmanian manifold $(M,g)$. The interest in understanding spinors derives from the application of the \emph{Atiyah-Singer index theorem} in spin geometry, which states that on a closed, oriented, spin Riemannian manifold $(M,g)$, the $\hat{A}$-genus is related to the \emph{spinor Dirac operator $\slashed{D}$} via 
\[
    \int_M\widehat{\textbf{\emph{A}}}(M)=\textbf{\emph{dim}}\text{Ker}\slashed{D}^+-\textbf{\emph{dim}}\text{Ker}\slashed{D}^-.
\]
As the above expression is only valid on spin Riemannian manifolds, having an explicit and geometric construction of the spinor bundle as well as the corresponding spin frame bundle which double covers the oriented orthonormal frame bundle of $(M,g)$ quickly becomes highly desirable.\par
\subsection*{Organization}
In this paper we will provide a recipe for cooking up explicit families of real spinor representations for the real Euclidean Clifford algebras, which only needs explicit representations in dimensions $1$-$4$. The key idea will be to use mixed field tensor products where the tensor products are allowed to vary from $\bR$, $\bC$, and $\bH$. From here we construct an explicit family of Euclidean spinors modeled on quaternionic multi-vectors as a well a family based on square-roots of space. Once a choice of spinor module is given, we discuss the notion of spin frames from this perspective as needed for passage to the spin vector bundle setting. For completeness, we include a section on the construction of spinor bundles which is rather standard but does also mention the spin orientation $\bR$-gerbe from our angle. We further include novel low dimensional phenomena including a discussion of the spin Levi-Civita connection of an oriented surface $\Sigma$, the treatment of spinors along hypersurfaces embedded in $\bR^4$, and a discussion of products of spinors giving a local Stoke's theorem for the Dirac operator. We conclude the paper with the generalization of our spinor modules to the setting of pseudo-Euclidean vector spaces, thus giving a complete collection of explicit geometric irreducible representations of real (pseudo)-Euclidean Clifford algebras.

\section{Preliminaries}
We begin by collecting a few standard definitions and structural results concerning Clifford algebras. The standard references for these can be found in \cite{ABS} or \cite{LM}. Given a finite dimensional real Euclidean vector space $(V,g)$, the \textbf{\emph{Clifford Algebra}} $C\ell(V,g)$ of $(V,g)$ is the universal unital $\bR$-algebra generated by vectors $v\in V$ subject to the relations
\[
    v\cdot w+w\cdot v=-2g(v,w),\quad \text{for all } v,w\in V.
\]
Using the $\bR$-tensor algebra of $V$
\[
    T_{\bR}(V):=\bigoplus_{j=0}^{\infty}V^{\otimes^j_{\bR}}
\]
an explicit construction of $C\ell(V,g)$ can be given as follows: take the two sided ideal of the tensor algebra $T_{\bR}(V)$ of $V$
\[
    I_g:=\la \bigl\{v\otimes w+w\otimes v+2g(v,w)\big|v,w\in V\bigr\}\ra
\]
and define
\[
    C\ell(V,g):=T_{\bR}(V)/I_g.
\]
A key property of $C\ell(V,g)$ can be obtained from this explicit construction. The \textbf{\emph{transpose automorphism}} on the tensor algebra is defined as
\begin{align*}
    \lp\cdot\rp^t:T_{\bR}(V)&\longmapsto T_{\bR}(V)^{op}\\
    v_1\otimes_{\bR}v_2\cdots\otimes_{\bR}v_k&\longmapsto v_k\otimes_{\bR}\cdots v_2\otimes_{\bR}v_1
\end{align*}
and preserves the ideal $I_g$ and thus descends to an algebra automorphism
\[
    \lp\cdot\rp^t:C\ell(V,g)\xrightarrow{\sim}C\ell(V,g)^{op}.
\]
Given any unital $\bR$-algebra $\mathcal{A}$ and $\bR$-linear map $\varphi:V\rightarrow \mathcal{A}$, if $\varphi$ satisfies the \textbf{\emph{Clifford condition}}
\[
    \varphi(v)\varphi(w)+\varphi(w)\varphi(v)=-2g(v,w),\quad\text{for all }v,w\in V
\]
then $\varphi$ lifts to a unique unital $\bR$-algebra morphism
\[
    \varphi:C\ell(V,g)\rightarrow \mathcal{A}.
\]
Note that if $(V,g)\simeq(\bR^n,g_n)$ is isometric to the standard Euclidean vector space of dimension $n$ then
\[
    C\ell(V,g)\simeq C\ell(\bR^n,g_n),
\]
and we will denote $C\ell(\bR^n,g_n)=C\ell_n$. What this effectively means is that we are free to work with one particular family of Clifford algebras when discussing algebraic properties of Clifford algebras and the structure of modules.\par
One of the key features that the real Clifford algebras $C\ell_n$ possess is the $8$-fold algebraic Bott periodicity which states that
\[
    C\ell_{n+8}\simeq C\ell_n\otimes_{\bR} C\ell_8, 
\]
where the $\otimes_{\bR}$ is the tensor product of $\bR$-algebras. This isomorphism effectively means that to determine the algebraic structure of $C\ell_n$ one need only determine the algebraic structure of $C\ell_1$ through $C\ell_8$. This is very well known and can be done explicitly as seen in \cite{ABS} or \cite{LM}
\begin{align*}
    C\ell_1&\simeq\bC,\quad
    C\ell_2\simeq\bH,\quad
    C\ell_3\simeq\bH\oplus\bH,\quad
    C\ell_4\simeq M_2(\bH)\\
    C\ell_5\simeq M_4(\bC)&,\quad
    C\ell_6\simeq M_8(\bR),\quad
    C\ell_7\simeq M_8(\bR)\oplus M_8(\bR),\quad
    C\ell_8\simeq M_{16}(\bR).
\end{align*}
In particular this shows that the Clifford algebras are either simple $\bR$-algebras or direct sums of simple $\bR$-algebras. This is particularly convenient for determining the structure of their modules. When the Clifford algebra $C\ell_n$ is isomorphic to a real matrix algebra $M_j(\bK)$ for $\bK$ a division algebra over $\bR$, as is the case when $n\neq 3\text{ (mod $4$)}$, there is up to isomorphism one unique irreducible module given as
\[
    S=\bK^j
\]
and the action is by the usual left action of matrices on vectors. When the Clifford algebra $C\ell_n$ is isomorphic to two copies of the same matrix algebra $M_j(\bK)\oplus M_j(\bK)$, as is the case when $n=3\text{ (mod $4$)}$, then there are two distinct irreducible modules up to isomorphism given by
\[
    S=\bK^j
\]
where either the first factor acts by matrix-vector multiplication and the second factor acts trivially or the first factor acts trivially and the second factor acts by matrix vector multiplication.\par
The real Clifford algebras come equipped with a real $\bZ_2$-grading automorphism $\varphi:C\ell(V,g)\rightarrow C\ell(V,g)$ defined on the generators by
\[
    \varphi(v)=-v\quad\text{for all }v\in V.
\]
The even and odd components of the Clifford algebras are defined to be the $\pm1$-eigenspaces of $\varphi$
\[
    C\ell^0(V,g)=\bigl\{ \omega\in C\ell_n\big|\quad\varphi(\omega)= \omega\bigr\},\quad C\ell^1(V,g)=\bigl\{ \omega\in C\ell_n\big|\quad\varphi(\omega)=- \omega\bigr\}
\]
Furthermore, in dimensions $n=0(\text{ mod } 4)$, the \textbf{\emph{real volume element}} $\boldsymbol{\nu}=e_1\cdots e_n\in C\ell_n$ squares to $1$
\[
    \boldsymbol{\nu}\cdot\boldsymbol{\nu}=1.
\]
This implies that if we are given a module $C\ell_n\circlearrowright M$, then this module becomes $\bZ_2$-graded by defining
\[
    M^{\pm}:=\lp 1\pm\boldsymbol{\nu}\rp\cdot M.
\]
In the case when $M$ is irreducible, this $\bZ_2$-grading is compatible with the right action of $\bK_n$. If we are given two a right $\bZ_2$-graded $\bK$-module $M$ and a left $\bZ_2$-graded $\bK$-module $N$, then we can form the tensor product becomes $M\hat{\otimes}_{\bK}N$ becomes $\bZ_2$-graded by defining
\begin{align*}
    \lp M\hat{\otimes}_{\bK}N\rp^+&=M^+\otimes_{\bK}N^+\oplus M^-\otimes_{\bK}N^-\\
    \lp M\hat{\otimes}_{\bK}N\rp^-&=M^+\otimes_{\bK}N^-\oplus M^-\otimes_{\bK}N^+
\end{align*}
The $\bZ_2$-grading on $M$ induces a $\bZ_2$-grading on $\End^R_{\bK}M$ by defining $T$ to be even if it preserves the splitting $M=M^+\oplus M^-$ and odd if it swaps the splitting. 
If $T\in\End^R_{\bK}M$ and $S\in\End^L_{\bK}N$ have a well defined grading and $m\otimes n\in M\hat{\otimes}N$ with $m\in M^{\pm}$, one defines
\[
    T\hat{\otimes}_{\bK}S\lp m\otimes_{\bK} n\rp:=(-1)^{\text{deg}S\cdot\text{deg}m}Tm\otimes_{\bK} Sn.
\]
For general $T\in\End^R_{\bK}M$ and $S\in\End^L_{\bK}N$ one extends the above definition bi-linearly on the graded components of $T$ and $S$.
\par
The groups $Pin(V,g)$ and $Spin(V,g)$ are defined as
\[
    Pin(V,g):=\la \bigl\{ v\in V\big|g(v,v)=1\bigr\}\ra\quad Spin(n):=Pin(V,g)\cap C\ell^0(V,g).
\]
If $C\ell^{\times}(V,g)$ denotes the collection of invertible elements of $C\ell(V,g)$, then $Pin(V,g)$ is a subgroup of $C\ell^{\times}(V,g)$ and thus inherits the \textbf{\emph{twisted adjoint representation}}
\[
    \widetilde{Ad}:C\ell^{\times}(V,g)\rightarrow GL(C\ell(V,g))
\]
given by
\[
    \widetilde{Ad}(\omega)(\tau):=\omega\cdot\tau\cdot\varphi(\omega)^{-1}.
\]
Note that if $\tau=v\in V$ and $\omega=w\in V$, then 
\[
    \widetilde{Ad}(w)(v)=v-2\frac{g(v,w)}{|w|^2}w,
\]
i.e. $\widetilde{Ad}(w)$ is the reflection through the plane perpendicular to $w\in V$. Thus this induces a $2$-to$1$ representation
\[
    \widetilde{Ad}:Pin(V,g)\rightarrow O(V,g)
\]
which when restricted to $Spin(V,g)$ gives a double cover of $SO(V,g)$
\[
    \widetilde{Ad}:Spin(V,g)\rightarrow SO(V,g).
\]
If we are given an irreducible left real module $S$ for $C\ell(V,g)$ then the \textbf{\emph{$S$-spin representation}} is defined to be the restriction of
\[
    \pi_S:C\ell(V,g)\rightarrow \End_{\bR}(S)
\]
to $Spin(V,g)$
\[
    \pi_S:Spin(V,g)\rightarrow GL(S,\bR).
\]
For a given irreducible left module $S$ of $C\ell(V,g)$ the \textbf{\emph{intertwiner algebra}}
\[
    \End_{C\ell}(S):=\bigl\{T\in\End_{\bR}(S)\big|\quad T(c(\omega)s)=c(\omega)T(s)\text{ for all }\omega\in C\ell(V,g),\text{ and }s\in S\bigr\}
\]
is isomorphic to a $\bR$-division algebra. Moreover, since all $\bR$-division algebras are isomorphic to their opposite algebra, we have that there is an isomorphism
\[
    \chi:\End_{C\ell}(S)\xrightarrow{\sim}\End_{C\ell}(S)^{op}.
\]
Thus, we can consider the left $C\ell(V,g)$-module as a $C\ell(V,g)$-$\End_{C\ell}(S)$-bimodule
\[
    C\ell(V,g)\circlearrowright S\circlearrowleft \End_{C\ell}(S).
\]
Note that in the case of $C\ell_n$ there is an isomorphism of algebras $\psi:C\ell_{n-1}\xrightarrow{\sim} C\ell^0_n$ induced by
\[
    \psi(v):=v\cdot e_n,\quad\text{for all }v\in\bR^{n-1}.
\]
This has an interesting consequence when discussing the intertwiners of irreducible modules of $C\ell_n$. For example, consider $C\ell_5$ and its irreducible action on $\bR^8$ which has an algebra of intertwiners isomorphic to $\bC$. If we restrict this action to $C\ell^0_5\simeq C\ell_4$, then the algebra of intertwiners grows to an algebra isomorophic to $\bH$. What this entails is that if $S$ is an irreducible $C\ell(V,g)$-module with intertwiner algebra isomorphic to $\bK_n$, when one restricts this representation to $Spin(V,g)$, the intertwiner algebra may grow to a larger algebra of intertwiners. For this reason we denote by $\bK^0_n$ the intertwiner algebra for the the restriction of an irreducible action of $C\ell_n$ to $C\ell^0_n$. These algebras are written up to isomorphism below where $n$ is taken $(\text{mod } 8)$
\begin{align*}
    \bK_1^0\simeq& M_2\lp\bR\rp,\quad \bK^0_2\simeq M_2\lp\bC\rp,\quad \bK^0_3\simeq \bH\quad \bK^0_4\simeq \bH\\
    \bK^0_5&\simeq\bH,\quad\bK^0_6\simeq \bC,\quad\bK^0_7\simeq \bR,\quad\bK^0_8\simeq\bR
\end{align*}
whereas
\begin{align*}
    \bK_1\simeq\bC,\quad \bK_2\simeq\bH,\quad\bK_3\simeq \bH,\quad \bK_4\simeq\bH\\
    \bK_5\simeq \bC,\quad \bK_6\simeq\bR,\quad\bK_7\simeq\bR,\quad\bK_8\simeq\bR.
\end{align*}
Note that $\bK_n\subseteq\bK^0_n$ and the \emph{intersection} of the centers of $\bK_n$ and $\bK^0_n$ is always naturally isomorphic to $\bR$. This has the following consequence: If $S_n$ is an irreducible module of $C\ell_n$, then the real vector space of left-$C\ell_n$-right-$\bK^0_n$ linear maps $\End_{C\ell,\bK^0_n}\lp S_n\rp$ is one dimensional.  \par

The main idea we will use to construct explicit modules for the Clifford algebras goes as follows: By the above classification, if we can find a module for a given $C\ell_n$ which has the same dimension over $\bK_n$ as the irreducible modules for $C\ell_n$, then by uniqueness this must be an irreducible module for $C\ell_n$. The catch of course is to find these explicit representations!
\section{The General Recipe}
Suppose $S_i$ are irreducible real representations of $C\ell_i$ for $1\leq i\leq 4$. We denote by 
\[
    c_i:\bR^i\rightarrow \End_{\bR}(S_i)
\]
the $\bR$-linear map which induces the representation $C\ell_i\circlearrowright S_i$. Note that in the case when $i=3$, if we compose $c_3$ with $-1$, we obtain
\[
    -c_3:\bR^3\rightarrow\End_{\bR}\lp S\rp
\]
which one can check generates a distinct irreducible representation of $C\ell_3$ which we denote by $-S_3$. Using the fact that the intertwiner algebra is symmetric $\End_{C\ell_i}(S_i)\simeq\End_{C\ell_i}(S_i)^{op}$ and the Clifford algebras are also symmetric via the transpose automorphism, we can treat any of our irreducible modules as $\End_{C\ell}(S_i)-C\ell_i$ bimodules
\[
    \End_{C\ell}(S_i)\circlearrowright S_i\circlearrowleft C\ell_i.
\]
In effect, what we have done is show that there are two versions of a given irreducible representation $S_i$, one for which the Clifford action is left-$\bK_i$-linear or right-$\bK_i$-linear. This distinction is a superfluous when $\bK_i\simeq\bR$ or $\bK_i\simeq\bC$, but is necessary when $\bK_i\simeq\bH$. We will label the Clifford map by $L$ or $R$ to distinguish these actions
\[
    c^L_i:\bR^i\rightarrow\End^L_{\bK_i}(S_i),\quad c^R_i:\bR^i\rightarrow\End^R_{\bK_i}(S_i).
\]
We further remark that we already know that we can take $\End_{C\ell}(S_i)\simeq \bH$ for $2\leq i\leq 4$ and $\End_{C\ell}(S_1)\simeq\bC$.\par
We are now in a position to construct all irreducible representations for $C\ell_n$ for \emph{any $n$} using the known representations $S_i$, $1\leq i\leq 4$. We first construct the representation $S_8$. Given $\lp S_4,c^R_r\rp$ and $\lp S_4,c^L_4\rp$, and recalling the $\bZ_2$-grading by the volume element, we can take the $\bZ_2$-graded $\bH$-tensor product of these modules obtaining
\[
    S_8:=S_4\hat{\otimes}_{\bH}S_4.
\]
We now obtain a $\bR$-linear map 
\begin{align*}
    c_8:\bR^4\oplus\bR^4&\longrightarrow \End_{\bR}\lp S_4\hat{\otimes}_{\bH}S_4\rp\\
    \lp u,v\rp&\longmapsto c^R_4(u)\hat{\otimes}_{\bH}1+1\hat{\otimes}_{\bH}c^L_4(v)
\end{align*}
which satisfies the Clifford condition and thus lifts to a representation 
\[
    C\ell_8\circlearrowright S_4\hat{\otimes}_{\bH}S_4
\]
which is irreducible by dimension comparison. Furthermore note that this module is $\bZ_2$-graded over $\bR$ since $8=0(\text{ mod $4$})$. Moreover, note that we can cook up the corresponding irreducible modules for $C\ell_{8k}$ by simply taking $\bZ_2$-graded $\bR$-tensor products of the above module with itself
\[
    S_{8k}:=\lp S_8\rp^{\hat{\otimes}^k_{\bR}}
\]
and defining
\begin{align*}
    c_{8k}:\lp\bR^8\rp^{\oplus^k}&\longrightarrow \End_{\bR}\lp S_{8k}\rp\\
    \lp u_1,\cdots,u_k\rp&\longmapsto \sum_{j=1}^{k}I^{\hat{\otimes}^{j-1}_{\bR}}\hat{\otimes}_{\bR}c_8(u_j)\hat{\otimes}_{\bR}I^{\hat{\otimes}^{k-j}_{\bR}}
\end{align*}
Now consider the general case of $n=8k+r$ with $1\leq r\leq 7$. If $1\leq r\leq 4$ we take
\[
    S_{8k+r}:=\lp S_8\rp^{\hat{\otimes}^k_{\bR}}\otimes_{\bR}S_r.
\]
We make an important point about the graded tensor products seen above: Although $S_{8k}:=\lp S_8\rp^{\hat{\otimes}^k_{\bR}}$ is perfectly well defined, the tensor product with $S_r$ will be taken as graded when $r=4$ and ungraded otherwise. However, when taking the tensor product of $I_{S_{8k}}$ with $c^R_r$, we will still take the standard $\bZ_2$-graded tensor product of operators
\begin{align*}
    \lp I_{S_{8k}}\hat{\otimes}_{\bR}c^R_1\rp(x) = \left\{
        \begin{array}{ll}
             I_{S_{8k}}\otimes_{\bR}c^R_1(x)  & \quad \text{on } S^+_{8k}\otimes_{\bR}S_r, \\
            - I_{S_{8k}}\otimes_{\bR}c^R_1(x)  & \quad \text{on } S^-_{8k}\otimes_{\bR}S_r
        \end{array}
    \right.
\end{align*}
for $x\in\bR^r$. We now take
\begin{align*}
    c_{8k+r}:\lp\bR^8\rp^{\oplus^k}\oplus\bR^r&\longrightarrow\End^R_{\bK_r}\lp S_{8k+r}\rp\\
    \lp u,v\rp&\longmapsto c_{8k}(u)\otimes_{\bR}I_{S_r}+I_{S_{8k}}\hat{\otimes}_{\bR}c^R_r(v)
\end{align*}
and observe that this satisfies the Clifford condition so lifts to an irreducible representation of $C\ell_{8k+r}$ by dimension comparison. When $5\leq r\leq 7$, first note 
\[
    S_{8k+4}=\lp S_8\rp^{\hat{\otimes}^k_{\bR}}\hat{\otimes}_{\bR}S_4
\]
is $\bZ_2$-graded and note that the grading and $c_{8k+4}$ are compatible with the right $\bH$-action. We now take
\[
    S_{8k+r}:=S_{8k+4}\otimes_{\bK_{r-4}}S_{r-4}
\]
with
\begin{align*}
    c_{8k+r}:\lp\lp\bR^8\rp^{\oplus^k}\oplus\bR^4\rp\oplus\bR^{r-4}&\longrightarrow \End_{\bR}\lp S_{8k+r}\rp\\
    \lp u,v\rp&\longmapsto c^R_{8k+4}(u)\otimes_{\bK_{r-4}} I_{S_{r-4}}+I\hat{\otimes}_{\bK_{r-4}}c^L_{r-4}(v).
\end{align*}
The above map satisfies the Clifford condition and thus lifts to an irreducible representation of $C\ell_{8k+r}$ by dimension comparison. In the case when $r=3\text{ (mod $4$)}$ there is still one more irreducible representation floating around which can be accounted for by replacing $S_3$ by $-S_3$ in the above construction. We remark that in general, to construct an irreducible module for $C\ell_n$ out of the four $S_i$ one typically needs tensor products not only over $\bR$ but also over $\bC$ and $\bH$. This may explain the internal complexity of the representations and why it took some time for this construction to come to light. 
\subsection{The First Explicit Family}
We construct here a family of modules for $C\ell_n$ modeled on the multi-vectors of $\bH$. To begin with, note that when $n=1$, if we consider the inclusion
\begin{align*}
    c_1:\bR&\rightarrow \text{Im}\bC\\
    x&\mapsto ix
\end{align*}
then since 
\[
    c_1(x)c_1(y)+c_1(y)c_1(x)=-2x\cdot y,\quad \text{for all } x,y\in\bR,
\]
we see that $c_1$ lifts to a unital $\bR$-algebra morphism
\[
    c_1:C\ell_1\rightarrow \bC.
\]
One can check by dimension comparison that this is an isomorphism of $\bR$-algebras. In particular we can take as our spinor module in dimension $1$ to be $\bC$ with the left action coming from $c_1$.\par
In dimension $2$, if we consider the standard $\bR$-linear inclusion 
\[
    c_2:\bR^2\rightarrow \bR^3
\]
and then treat $\bR^3$ as the imaginary part of $\bH$, then we obtain an inclusion 
\[
    c_2:\bR^2\rightarrow \text{Im}\bH
\]
which satisfies 
\[
    c_2(x)c_2(y)+c_2(y)c_2(x)=-2g_2(x,y)\quad \text{for all }x,y\in\bR^2.
\]
Thus $c_2$ lifts to a unital $\bR$-algebra morphism
\[
    c_2:C\ell_2\rightarrow\bH
\]
which is an isomorphism by dimension comparison. In particular we can take our spinor module in dimension $2$ to be $\bH$ with the left action coming from $c_2$.\par
In dimension $3$, we can use the same observation in $n=2$ and identify
\[
    c_3:\bR^3\rightarrow \text{Im}\bH,
\]
in which case $c_3$ satisfies 
\[
    c_3(x)c_3(y)+c_3(y)c_3(x)=-2g_3(x,y),\quad \text{for all }x,y\in\bR^3.
\]
This will give us an induced unital $\bR$-algebra morphism
\[
    c_3:C\ell_3\rightarrow \bH.
\]
Note however that this will \textbf{\emph{not}} be an isomorphism since the $\bR$ dimension of $C\ell_3$ is $8$ while that of $\bH$ is 4. Note however that $c_3$ does produce a spinor module via
\begin{align*}
    C\ell_3\times \bH&\rightarrow \bH\\
    (x,q)&\mapsto c_3(x)\cdot q.
\end{align*}
This module is irreducible via the classification $C\ell_3\simeq \bH\oplus\bH$. To obtain the other spinor module, simply compose $c_3$ with multiplication by $-1$ on $\bR^3$, i.e.
\begin{align*}
   -c_3:\bR^3&\rightarrow\text{Im}\bH\\
    x&\mapsto -c_3(x).
\end{align*}
One has that
\[
    c_3(x)c_3(y)+c_3(y)c_3(x)=-2g_3(x,y)\quad\text{for all }x,y\in\bR^3
\]
and thus $-c_3$ lifts to a unital $\bR$-algebra morphism
\[
    -c_3:C\ell_3\rightarrow\bH.
\]
Thus we have obtained another irreducible spinor module for $C\ell_3$. To see that the modules given by $c_3$ and $-c_3$ are distinct, note that for the element $\omega=e_1e_2e_3\in C\ell_3$, for $e_1,e_2,e_3$ the standard basis vectors of $\bR^3$, one has
\[
    c_3(\omega)=-1,\quad -c_3(\omega)=1.
\]
Thus there can be no intertwining map between the modules determined by $c_3$ and $-c_3$. We take these to be our spinor modules in dimension $3$. Note that $c_i$ for $1\leq i\leq 3$ are naturally right $\bK_i$-linear.\par
Things take an interesting turn in dimension $4$. Firstly, we no longer can employ the trick of embedding the vector space $\bR^4$ into the imaginary part of a larger \emph{associative} $\bR$-division algebra for which the Clifford relations will be satisfied. Rather here we will treat $\bR^4$ as $\bH$ and take as our spinor module $\wedge_{\bH}\bH$. We first briefly discuss the $\bH$-bimodule $\wedge_{\bH}\bH$. Note that if $\mathcal{A}$ is a unital $\bR$-algebra, and $M$ is a \textbf{\emph{bimodule}} over $\mathcal{A}$, then the $\mathcal{A}$-tensor algebra is
\[
    T_{\mathcal{A}}(\mathcal{M}):=\bigoplus_{k=0}^{\infty}T^k_{\mathcal{A}}(\mathcal{M})
\]
where for $k>0$
\[
    T^k_{\mathcal{A}}(\mathcal{M}):=\mathcal{M}\otimes_{\mathcal{A}}\mathcal{M}\otimes_{\mathcal{A}}\cdots\otimes_{\mathcal{A}}\mathcal{M}
\]
where there are $k$-tensor factors appearing in the above tensor product. For $k=0$ we take the convention
\[
    T^0_{\mathcal{A}}(\mathcal{M})=\mathcal{A}.
\]
Note that $T_{\mathcal{A}}(\mathcal{M})$ remains a $\mathcal{A}$-bimodule in a natural manner. Thus the $\mathcal{A}$-tensor algebra $T_{\mathcal{A}}(\mathcal{M})$ is an $\mathcal{A}$-$\mathcal{A}$-algebra. If we take $I_{\wedge}$ to be the two sided ideal
\[
    I_{\wedge}:=\la\bigl\{m\otimes_{\mathcal{A}}m\big|m\in\mathcal{M}\bigr\}\ra,
\]
then the $\mathcal{A}$-exterior algebra of $\mathcal{M}$ is defined to be 
\[
    \wedge_{\mathcal{A}}\mathcal{M}:=T_{\mathcal{A}}(\mathcal{M})/I_{\wedge}.
\]
Note that $\wedge_{\mathcal{A}}\mathcal{M}$ remains a $\mathcal{A}$-$\mathcal{A}$-algebra. We are interested in employing this construction when $\mathcal{A}=\mathcal{M}=\bH$. In this case we obtain the quaternic exterior algebra
\[
    \wedge_{\bH}\bH.
\]
Note that we have $\wedge_{\bH}\bH\simeq \bH\oplus\bH$ as $\bH$-bimodules. If we further note that $\bH$ has a right quaternic metric
\[
    g_{\bH}^R:\bH\times\bH\rightarrow\bH
\]
given via
\[
    g^R_{\bH}(q,w):=\overline{q}\cdot w,
\]
then for any $q\in\bH$ we obtain a left contraction operator $\iota^L_q:\wedge_{\bH}\bH\rightarrow\wedge_{\bH}\bH$. If we write $\wedge_{\bH}\bH=\wedge^0_{\bH}\bH\oplus\wedge^1_{\bH}\bH$ then in matrix form the operator $\iota^L_q$ is
\[
    \iota^L_q=\begin{bmatrix}
        0 & \overline{q}\cdot \\
        0 & 0 \\
    \end{bmatrix}
\]
where $\overline{q}\cdot$ denotes \textbf{\emph{left multiplication}} by $\overline{q}$. In particular $\iota^L_q$ is right-$\bH$-linear. We also have the left wedge operator defined $\varepsilon^L_q:\wedge_{\bH}\bH\rightarrow\wedge_{\bH}\bH$ for any $q\in\bH$ where in matrix form it is given as
\[
    \varepsilon^L_q=\begin{bmatrix}
        0 & 0 \\
        q\cdot & 0 \\
    \end{bmatrix}
\]
here again $q\cdot$ denotes left multiplication. Note also that $\varepsilon^L_q$ is right-$\bH$-linear for any $q\in\bH$. We now define
\begin{align*}
    c^R_4:\bH&\rightarrow \End^R_{\bH}(\wedge_{\bH}\bH)\\
    q&\mapsto \varepsilon^L_q-\iota^L_q.
\end{align*}
First note that $c^R_4(q)$ is \emph{only} $\bR$-linear in $q$. Secondly, note that 
\[
    c^R_4(q)c^R_4(w)+c^R_4(w)c^R_4(q)=-2g_4(q,w)\quad\text{for all }q,w\in\bH.
\]
Thus $c^R_4$ lifts to unital $\bR$-algebra morphism
\[
    c^R_4:C\ell_4\rightarrow\End^R_{\bH}(\wedge_{\bH}\bH).
\]
By dimension comparison this will be an isomorphism and we have thus obtained an explicit model for real spinors in dimension $4$ using quaternionic multi-vectors. Furthermore, in this case the spinor module is naturally $\bZ_2$-graded by taking the degree of the multivector to be the choice of grading.\par
By the general recipe given above, we see that for $n=8k+r$, $k\geq0$ we can take
\begin{align*}
    S_{8k+r}&:=\lp \wedge_{\bH}\bH\hat{\otimes}_{\bH}\wedge_{\bH}\bH\rp^{\hat{\otimes}^k_{\bR}}\otimes_{\bR}\bC,\quad r=1\\
    S_{8k+r}&:=\lp \wedge_{\bH}\bH\hat{\otimes}_{\bH}\wedge_{\bH}\bH\rp^{\hat{\otimes}^k_{\bR}}\otimes_{\bR}\bH,\quad r=2,3\\
    S_{8k+r}&:=\lp \wedge_{\bH}\bH\hat{\otimes}_{\bH}\wedge_{\bH}\bH\rp^{\hat{\otimes}^k_{\bR}}\hat{\otimes}_{\bH}\wedge_{\bH}\bH,\quad r=4\\
    S_{8k+r}&:=\lp \wedge_{\bH}\bH\hat{\otimes}_{\bH}\wedge_{\bH}\bH\rp^{\hat{\otimes}^k_{\bR}}\hat{\otimes}_{\bH}\wedge_{\bH}\bH\otimes_{\bC}\bC,\quad r=5\\
    S_{8k+r}&:=\lp \wedge_{\bH}\bH\hat{\otimes}_{\bH}\wedge_{\bH}\bH\rp^{\hat{\otimes}^k_{\bR}}\hat{\otimes}_{\bH}\wedge_{\bH}\bH\otimes_{\bH}\bH,\quad r=6,7\\
    S_{8k+r}&:=\lp \wedge_{\bH}\bH\hat{\otimes}_{\bH}\wedge_{\bH}\bH\rp^{\hat{\otimes}^{k+1}_{\bR}},\quad\quad r=8
\end{align*}
The astute reader may have noticed that tensoring by $\bC$ or $\bH$ in dimensions $8k+r$, $5\leq r\leq 7$ is redundant within these models for $S_{8k+r}$. One could simply take $S_{8k+r}$ to be some appropriate tensoring of $\wedge_{\bH}\bH$ with itself $k+1$-times, and then define the remaining action by the $r$-variables on the right by multiplication by $i$, $j$, or $k$. However, as we shall see below, there are different models for $S_r$ for which a canonical identification of $S_r$ with $\bC$, $\bH$, or $\wedge_{\bH}\bH$ is not available. 
\section{The Notion of Spin Coordinate System}
Recall that for $(V,g,\cal_{\bR})$, a real, oriented Euclidean vector space of $\bR$-dimension $n$, one can consider the set of oriented orthogonal frames $P_{SO}(V,g,\cal_{\bR})$ defined as the set of lists of vectors $\{ e_1,\cdots,e_n\}$ such that the elements of the list are unit length, pair-wise orthogonal, and the full list forms a basis of $V$ which is compatible with the orientation on $V$. An alternative perspective is to view elements of $P_{SO}(V,g,\cal_{\bR})$ as linear isometries 
\[
    \phi:(\bR^n,g_n)\rightarrow (V,g)
\]
which preserves the orientation (here $(\bR^n,g_n)$ is given the canonical orientation). From the latter perspective we can consider $P_{SO}(V,g,\cal_{\bR})$ as the ``moduli space of oriented orthonormal coordinate systems" for $(V,g,\cal_{\bR})$. The two interpretations of $P_{SO}(v,g,\cal_{\bR})$ are clearly equivalent and we will freely interchange between the two when convenient. For us, the latter perspective on frames will be best when describing the spin coordinate systems of $(V,g,\cal_{\bR})$.\par
Consider $(V,g,\cal_{\bR})$ along with an irreducible left module $\Sc_V$ for $C\ell(V,g)$. We further require that $\Sc_V$ admits the following extra \textbf{\emph{bells and whistles}}
\begin{itemize}
    \item[(I)] A right action of the division algebra $\bK_n$ for which the Clifford algebra is a matrix algebra over in such a way that $C\ell(V,g)\circlearrowright\Sc_V\circlearrowleft\bK_n$ becomes a $C\ell_n-\bK_n$-bimodule. This is effectively a choice of unital $\bR$-algebra isomorphism $\End_{Cl}\lp\Sc_V\rp\simeq \bK_n$. For $\bK_n=\bR$ or $\bK_n=\bC$ the choice is unique, but for $\bK_n=\bH$ there is a moduli of choices.
    \item[(II)] A \emph{real} inner product $g_{\Sc}$ such that for any given $v\in V$ and $a\in \text{Im}\bK_n$ the actions $c^L(v)$ and $a_R$ are skew-adjoint
    \item[(III)] A right action by the algebra $\bK^0_n$ for which $C\ell^0_n\circlearrowright\Sc_V\circlearrowleft\bK^0_n$ is a $C\ell^0_n-\bK^0_n$ bimodule and moreover for which the right actions by $\bK_{n}$ and $\bK^0_n$ are compatible with respect to the natural inclusion $\bK_n\subseteq\bK^0_n$. Once again, this is equivalent to a choice of untial $\bR$-algebra isomorphism $\End_{C\ell^0}\lp\Sc_V\rp\simeq \bK^0_n$ which extends the chosen isomorphism $\End_{C\ell}(\Sc_V)\simeq\bK_n$ in (I). In general, there may be a moduli of possible choices.
\end{itemize}
The existence of such structures on \emph{any} given module $\Sc_V$ is given in \cite{ABS} and \cite{LM}. A rather important remark with regards to such metrics $g_{\Sc}$ is that not only do they exist, but they are also determined up to a positive constant, i.e. if $g^1_{\Sc}$ and $g^2_{\Sc}$ both satisfy condition (II), then there exists $\lambda>0$ such that
\[
    g^2_{\Sc}=\lambda g^1_{\Sc}.
\]
On the quaternic multivector model, there is a natural choice of metric $g_{\Sc}$ which we briefly describe. On $\bC$, we let $g_{S_1}$ be the real part of the standard Hermitian metric $g_{\bC}(z,w)=\overline{z}w$ on $\bC$. On $\bH$, we take $g_{S_2}$ and $g_{S_3}$ to be the real part of the standard quaternic metric on $\bH$ $g_{\bH}\lp q,w\rp=\overline{q}w$. On $\wedge_{\bH}\bH$ we take $g_{S_4}$ to be the real part of the quaternic exterior power of the standard quaternic metric on $\bH$, $\wedge_{\bH}g_{\bH}\lp \lp q_1,q_2\rp,\lp w_1,w_2\rp\rp=\overline{q_1}w_1+\overline{q}_2w_2$. For a general $S_{8k+r}$, $0\leq r\leq 8$, we take $g_{S_r}$ to be tensor products of the corresponding metrics on $S_i$ defined above.\par
Fix an irreducible spinor module $S_n$ for $\bR^n$ along with a \emph{spin metric} $g_{S_n}$ satisfying (II) above. If we are given a module with such structure, denoted as
\[
   C\ell^{\bullet}(V,g)\circlearrowright(\Sc_V,g_{\Sc})\circlearrowleft\bK^{\bullet}_n, 
\]
then we define the \textbf{\emph{spin coordinate systems of $(V,g,\cal_{\bR},\Sc_V)$}} as the set of isometries $\Phi:(S_n,g_{S_n})\rightarrow (\Sc_V,g_{\Sc})$ which are right $\bK^0_n$-linear and such that there exists a \textbf{\emph{unique}} $\phi\in P_{SO}(V,g,\cal_{\bR})$ for which given any $v\in \bR^n$ and $s\in S_n$ one has
\[
    \Phi(c(v)\cdot s)=c(\phi(v))\cdot\Phi(s).
\]
That this set is non-empty follows from the classification of Clifford modules. We denote the set of spin coordinate systems of $(V,g,\cal_{\bR},\Sc_V)$ as $P_{Spin}(V,g,\cal_{\bR},\Sc_V)$. When no confusion can arise, we will omit the extra details from our notation of $P_{Spin}(V,g,\cal_{\bR},\Sc_V)$. Note that for any given $g\in Spin(n)$, one has that $\Phi\circ g\cdot\in P_{Spin}(V)$ and thus $P_{Spin}(V)$ admits a natural \emph{free right action} of $Spin(n)$
\begin{align*}
    P_{Spin}(V,g,\cal_{\bR},\Sc_V)\times Spin(n)&\longrightarrow P_{Spin}(V,g,\cal_{\bR},\Sc_V)\\
    (\Phi,g)&\longmapsto \Phi\circ g\cdot.
\end{align*}
Furthermore, note that $P_{Spin}(V,g)$ admits a natural map
\begin{align*}
    p:P_{Spin}(V,g,\cal_{\bR},\Sc_V)&\longrightarrow P_{SO}(V,g,\cal_{\bR})\\
    \Phi&\longmapsto \phi
\end{align*}
which is compatible with the double covering $Ad:Spin(n)\rightarrow SO(n)$ 
\[
    p(\Phi\circ g)=p(\Phi)\circ Ad(g),\quad \text{for all }\Phi\in P_{Spin}(V,g,\cal_{\bR},\Sc_V)\text{ and all }g\in Spin(n).
\]
To see that the map $p$ is $2$-to-$1$ suppose that $\Phi,\Psi\in P_{Spin}(V,g)$ are such that $p(\Phi)=p(\Psi)$. Then one has that 
\[
    \Psi^{-1}\Phi:(S_n,g_{S_n})\rightarrow (S_n,g_{S_n})
\]
such that for all $v\in C\ell_n$, $s\in S_n$, and $a\in \bK^0_n$, one has
\[
    \Psi^{-1}\Phi(c(v)\cdot s)=c(v)\cdot\Psi^{-1}\Phi(s),\quad \Psi^{-1}\Phi(s\cdot a)=\Psi^{-1}\Phi(s)\cdot a.
\]
The only such transformations of $(S_n,g_{S_n})$ which satisfy the above relations are $\pm I_{S_n}$, for $I_{S_n}$ the identity operator of $S_n$. Thus any given preimage $p^{-1}(\phi)$ is a $2$-point set. To see that $p$ is \emph{surjective} consider a fixed $\phi\in P_{SO}(V,g,\cal_{\bR})$ and a fixed $\Psi\in P_{Spin}(V,g,\cal_{\bR},\Sc_V)$.Then $p(\Psi)=\psi$ and there exists a unique $R\in SO(n)$ such that $\psi\circ R=\phi$. Then if $g\in Spin(n)$ is chosen so that $Ad(g)=R$, then $p(\Psi\circ g)=p(\Psi)\circ Ad(g)=\phi\circ R=\psi$. Thus the map $p$ is indeed $2$-to-$1$. \par 
What the above computation has shown is that if we were to consider the spin coordinate systems for $S_n$ then we have 
\[
    P_{Spin}(\bR^n,g_n,\cal_{\bR},S_n)=Spin(n),
\]
i.e. a linear isometry of $(S_n,g_{S_n})$ which commutes with the right action of $\bK^0_n$ and commutes with the left action of $C\ell_n$ \emph{up to an oriented isometry of $\bR^n$} must be a left action by an element $g\in Spin(n)$. From a certain perspective this is a geometric characterization of spin transformations in the sense that special orthogonal transformations are those which are linear isometries of $(\bR^n,g_n)$ which preserve the orientation.\par
We have yet to check that the action of $Spin(n)$ on $P_{Spin}(V,g,\cal_{\bR},\Sc_V)$ is \emph{transitive}. Moreover, we have also not considered a topology and smooth structure for $P_{Spin}(V,g,\cal_{\bR},\Sc_V)$ and shown that the action of $Spin(n)$ is \emph{smooth}. The choice of smooth structure is the natural choice: if we fix $\Phi\in P_{Spin}(V,g,\cal_{\bR},\Sc_V)$, then the map
\begin{align*}
    F_{\Phi}:P_{Spin}(V,g,\cal_{\bR},\Sc_V)&\rightarrow P_{Spin}(\bR^n,g_n,\cal_{\bR},S_n)\\
    \Psi&\mapsto \Psi^{-1}\Phi
\end{align*}
is a set bijection which is equivariant with respect to the right action of $Spin(n)$. Since $Spin(n)$ act transitively on $P_{Spin}(\bR^n,g_n,\cal_{\bR},S_n)$, we have that it also acts transitively on $P_{Spin}(V,g,\cal_{\bR},\Sc_V)$. We use the map $F_{\Phi}$ to equip $P_{Spin}(V,g,\cal_{\bR},\Sc_V)$ with the smooth structure of $P_{Spin}(\bR^n,g_n,\cal_{\bR},S_n)$. If we were to choose a different $\Psi\in P_{Spin}(V,g,\cal_{\bR},\Sc)$ then there exists a unique $g\in Spin(n)$ such that
\[
    F_{\Psi}\circ F_{\Phi}^{-1}(h)=g\cdot h \quad \text{for }h\in P_{Spin}(\bR^n,g_n,\cal_{\bR},S_n)
\]
and which implies smooth structure given by $F_{\Psi}$ and $F_{\Phi}$ agree. We see from the defining smooth structure that the map 
\[
    p:P_{Spin}(V,g,\cal_{\bR},\Sc_V)\rightarrow P_{SO}(V,g,\cal_{\bR})
\]
is smooth. Thus we have obtained our model of spin coordinate systems $P_{Spin}(V,g,\cal_{\bR},\Sc_V)$.
\subsection{Spin Structures on Smooth Vector Bundles}
We now wish to extend the notion of spin coordinate systems of $(V,g,\cal_{\bR},\Sc_V)$ to that of a real smooth vector bundle $E\rightarrow M$ over a smooth manifold. We will assume that $E\rightarrow M$ is equipped with a Euclidean fiber metric $g_E$ and has a global orientation $\cal_{\bR}$. Recall that $(E,g_E,\cal_{\bR})\rightarrow M$  \textbf{\emph{admits a spin structure}} if there exists a principal $Spin(n)$-bundle $P\circlearrowleft Spin(n)\rightarrow M$ and a fibered double covering
\[
    p:P\rightarrow P_{SO}(E,g_E,\cal_{\bR})
\]
such that for all $g\in Spin(n)$ one has $p(\Phi\cdot g)=p(\Phi)\cdot Ad(g)$. It is known that in the $Spin^c$ setting that a $Spin^c$-structure is equivalent to a bundle of complex Hermitian vector spaces $(\Sc,h)\rightarrow M$ which admits an irreducible left action by the bundle of complexified Clifford algebras $\bC\ell(E,g_E)\rightarrow M$ (Question 4.36 in \cite{Roe}). In this case, the analogue in the real setting is given below. We let $\underline{\bK_n}$ denote the trivial bundle of division algebras over $M$.
\begin{theorem}
    An oriented Euclidean vector bundle $(E,g_E,\cal_{\bR})\rightarrow M$ of rank $n$ admits a spin structure if and only if there is a real Euclidean vector bundle $(\Sc_E,g_{\Sc})\rightarrow M$ which is a bundle of $C\ell(E,g_E)-\underline{\bK_n}$ bimodules as well as a bundle of $C\ell^0(E,g_E)-\underline{\bK^0_n}$-bimodules for which the Clifford action is irreducible and the Clifford action by vectors in $E$ and right multiplication by $a\in\emph{Im}\underline{\bK_n}$ is skew-adjoint.
\end{theorem}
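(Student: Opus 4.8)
The plan is to prove the two directions separately, with the backward direction being essentially a gluing argument and the forward direction requiring a genuine construction.

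For the forward direction, suppose $(E,g_E,\cal_{\bR})\to M$ admits a spin structure, i.e.\ a principal $Spin(n)$-bundle $P\to M$ with a fibered double cover $p:P\to P_{SO}(E,g_E,\cal_{\bR})$ intertwining the actions via $Ad$. I would define the candidate spinor bundle as the associated bundle
\[
    \Sc_E := P\times_{Spin(n)}S_n,
\]
where $S_n$ is the fixed irreducible module with its spin metric $g_{S_n}$ and its right $\bK^0_n$-action, and $Spin(n)$ acts on $S_n$ by the left Clifford multiplication $\pi_{S_n}$ restricted from $C\ell_n$. Since the spin metric is determined up to a positive scalar and $Spin(n)$ acts by $g_{S_n}$-isometries (the generators are products of unit vectors, which act skew-adjointly by (II), hence their exponentials act orthogonally), the metric $g_{S_n}$ descends to a fiber metric $g_{\Sc}$ on $\Sc_E$. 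The right $\bK^0_n$-action commutes with the left $C\ell_n$-action and in particular with $Spin(n)\subset C\ell^0_n$, so it descends to a bundle $\underline{\bK^0_n}$-action; restricting to $\bK_n\subseteq\bK^0_n$ gives the $\underline{\bK_n}$-action. The left Clifford action is the delicate point: $C\ell(E,g_E)$ is itself the associated bundle $P_{SO}\times_{SO(n)}C\ell_n$ (via $SO(n)$ acting by the extension of $\widetilde{Ad}$ to all of $C\ell_n$), and one checks that the formula $[\phi,\omega]\cdot[\Phi,s]:=[\Phi,\omega\cdot s]$ — using $p(\Phi)=\phi$ to match representatives — is well defined precisely because $\pi_{S_n}(g\cdot\omega\cdot s)=g\cdot\pi_{S_n}(\omega)g^{-1}\cdot(g\cdot s)$ and $\widetilde{Ad}(g)$ is the fiberwise action on $C\ell_n$. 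Irreducibility is fiberwise, hence automatic. Skew-adjointness of $c^L(v)$ for unit $v\in E$, and of right multiplication by $a\in\tn{Im}\,\underline{\bK_n}$, holds in each fiber by property (II) and is preserved under the structure-group transition maps since those act by spin metric isometries.

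For the backward direction, suppose we are given such a bundle $(\Sc_E,g_{\Sc})\to M$ with all the stated bimodule and adjointness structure. I would define
\[
    P := \bigl\{\,\Phi:(S_n,g_{S_n})\xrightarrow{\ \sim\ }(\Sc_E)_x \ \big|\ x\in M,\ \Phi\text{ a } g_{S_n}\text{-isometry, right }\bK^0_n\text{-linear, and }\exists!\,\phi\in P_{SO}(E_x)\text{ with }\Phi\circ c(v)=c(\phi(v))\circ\Phi\,\bigr\},
\]
the fiberwise version of $P_{Spin}(V,g,\cal_{\bR},\Sc_V)$ from the previous section. The fiberwise analysis already carried out in the excerpt shows each fiber $P_x$ is a $Spin(n)$-torsor and the map $\Phi\mapsto\phi$ is a $2$-to-$1$ cover of $P_{SO}(E_x)$ compatible with $Ad$. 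It remains to equip $P$ with a smooth structure making it a principal $Spin(n)$-bundle and $p$ a smooth fibered double cover: I would do this by local triviality — over an open $U\subseteq M$ trivializing $E$, $\Sc_E$, and all the algebraic bundles compatibly, the fiberwise identifications of the previous section give $P|_U\cong U\times Spin(n)$, and transition functions land in $Spin(n)$ because changes of local trivialization act by right-$\bK^0_n$-linear spin-metric isometries commuting with the Clifford action up to oriented isometry, which is exactly the characterization $P_{Spin}(\bR^n,g_n,\cal_{\bR},S_n)=Spin(n)$ established above.

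The main obstacle is the well-definedness and smoothness of the left Clifford multiplication in the forward direction (and symmetrically, the coherence of local trivializations in the backward direction): one must carefully track how $\widetilde{Ad}$ extends from $O(n)$ acting on $V$ to $SO(n)$ acting on the whole algebra $C\ell_n$, and verify that the associated-bundle description of $C\ell(E,g_E)$ via $P_{SO}$ is compatible, through the double cover $p$, with the $Spin(n)$-equivariance of $\pi_{S_n}$. This is the heart of the statement; the metric, the two right actions, and irreducibility are all either fiberwise or follow formally from property (II) and the fact — recorded in the preliminaries — that the relevant isometry group of $(S_n,g_{S_n})$ compatible with all structures is exactly $Spin(n)$. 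I would also remark that the hypothesis that $\Sc_E$ is simultaneously a $C\ell^0(E,g_E)$-$\underline{\bK^0_n}$-bimodule (not merely a $C\ell(E,g_E)$-$\underline{\bK_n}$-bimodule) is what rigidifies the torsor structure and rules out the extra automorphisms that would otherwise obstruct identifying $P$ with a genuine $Spin(n)$-bundle.
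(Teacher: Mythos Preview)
Your proposal is correct and follows essentially the same approach as the paper: the forward direction builds $\Sc_E$ as the associated bundle $P\times_{\rho}S_n$ and checks that the metric, the right $\bK_n^{\bullet}$-actions, and the left Clifford action all descend, while the backward direction defines the spin lift as the fiberwise bundle of spin coordinate systems $P_{Spin}(E_m,g_{E_m},\cal_{\bR},\Sc_{E_m})$. If anything, you supply more detail than the paper does on the well-definedness of the Clifford action via the compatibility of $\widetilde{Ad}$ with the double cover, whereas the paper simply records the isomorphism $C\ell(E,g_E)\simeq P\times_{Ad\rho}C\ell_n$ and moves on.
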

\begin{proof}
    Suppose that $(E,g_E,\cal_{\bR})\rightarrow M$ admits a spin lift $P\circlearrowleft Spin(n)\rightarrow P_{SO}(E,g_E,\cal_{\bR})\circlearrowleft SO(n)$. If $S_n$ denotes the real irreducible module constructed for $C\ell_n$ above, note that the spin representation $\rho :Spin(n)\rightarrow GL(S_n,\bR)$ is given by restriction of the action $C\ell_n\circlearrowright S_n$ to $Spin(n)$. In particular, the spin representation inherits all of the extra bells and whistles that the Clifford action has, i.e. commutativity with the right action by $\bK_n$ and the real metric $g_{S_n}$ for which the left action by vectors in $\bR^n$ and left action by elements in $\text{Im}\bK_n$ is skew-adjoint. Moreover, note that $Spin(n)\subseteq C\ell^0_n$, hence if $C\ell^0_n$ has a larger right intertwiner algebra $\bK^0_n$, then the action $Spin(n)\circlearrowright S_n$ will be compatible with the right action by $\bK^0_n$ as well. Since the right action $C\ell_n\circlearrowright S_n$ is compatible with the respective spin representations, this action descends to an action
    \[
        P\times_{Ad\rho} C\ell_n\circlearrowright (P\times_{\rho}S_n,g_{S_n})\circlearrowleft \underline{\bK_n},\quad P\times_{Ad\rho}C\ell^0_n\circlearrowright P\times_{\rho}S_n\circlearrowleft \underline{\bK^0_n}
    \]
    Because $C\ell(E,g_E)\simeq P\times_{Ad\rho}C\ell_n$ as a bundle of algebras, we have the asserted spinor module.\par
    Conversely, suppose we are given such a spinor module 
    \[
        C\ell(E,g_E)\circlearrowright (\Sc_E,g_{\Sc})\circlearrowleft\underline{\bK_n},\quad C\ell^0(E,g_E)\circlearrowright \Sc_E\circlearrowright \underline{\bK^0_n}
    \]
    We define the \textbf{\emph{principal bundle of spin coordinate systems for $(E,g_E,\cal_{\bR},\Sc_E)$}} to be the disjiont union
    \[
    P_{Spin}(E,g_E,\cal_{\bR},\Sc_E):=\bigsqcup_{m\in M}P_{Spin}(E_m,g_{E_m},\cal_{\bR},\Sc_{E_m}).
    \]
    The smooth structure is the natural choice and this fibration $P_{Spin}(E,g_E,\cal_{\bR},\Sc_E)\rightarrow M$ admits a right free and fibered transitive action of $Spin(n)$ such that the natural map
    \[
        p:P_{Spin}(E,g_E,\cal_{\bR},\Sc_E)\rightarrow P_{SO}(E,g_E,\cal_{\bR})
    \]
    is a smooth, fibered double covering which is compatible with the double covering $Ad:Spin(n)\rightarrow SO(n)$. Thus, we have obtained the desired spin structure $P_{Spin}(E,g_E,\cal_{\bR},\Sc_E)$.
\end{proof}
Given a spin lift $P\circlearrowleft Spin(n)\rightarrow P_{SO}\lp E,g_E,\cal_{\bR}\rp$, one can check that the map
\begin{align*}
    P&\longrightarrow P_{Spin}\lp E,g_E,\cal_{\bR},P\times_{\rho}S_n\rp\\
    p&\longmapsto \lp s\rightarrow [(p,s)]\rp
\end{align*}
where $[p,s]$ denotes the equivalence class in the quotient $P\times_{\rho}S_n$, gives an isomorphism of principal $Spin(n)$-bundles. Thus, what we have really shown is that \emph{every principal $Spin$ lift of $P_{SO}\lp E,g_E,\cal_{\bR}\rp$ is, up to isomorphism, of the type we have described. i.e. a bundle of spin coordinate systems for a Euclidean bundle of real spinors $\lp\Sc_E,g_{\Sc}\rp\rightarrow M$}.
\subsection{Constructing Spinor Bundles}
Suppose that $M=\cup_{\alpha\in A}U_{\alpha}$ where all finite intersections of the $U_{\alpha}$ are contractible, and we are also given explicit sections $e_{\alpha}\in \Gamma\lp P_{SO}(E,g_E,\cal_{\bR})\big|_{U_\alpha}\rp$. The $e_{\alpha}$ generate an oriented Euclidean bundle isomorphism
\[
    \lp E,g_E,\cal_{\bR}\rp\big|_{U_{\alpha}}\simeq_{e_{\alpha}} U_{\alpha}\times\lp\bR^r,g_r,\cal_{\bR}\rp
\]
which induce a spinor bundle for $C\ell\lp E,g_E\rp$ along $U_{\alpha}$
\[
    \lp C\ell^{\bullet}\lp E,g_E\rp\rp\big|_{U_{\alpha}}\circlearrowright_{e_{\alpha}}U_{\alpha}\times\lp S_r,g_{S_r}\rp\circlearrowleft\bK^{\bullet}_r.
\]
The transition functions $g_{\alpha\beta}:U_{\alpha}\cap U_{\beta}\rightarrow SO(r)$ have \emph{two possible lifts}
\[
    \pm \tilde{g}_{\alpha\beta}:U_{\alpha}\cap U_{\beta}\rightarrow Spin(r).
\]
Once a choice of lift has been made, one can use the $\tilde{g}_{\alpha\beta}$ as transition data for $U_{\alpha}\cap U_{\beta}\times \lp S_r,g_{S_r}\rp$. In general, what one will find is that the transition data \emph{almost} satisfy the cocylce condition
\[
    \tilde{g}_{\alpha\beta}\tilde{g}_{\beta\gamma}=\pm \tilde{g}_{\alpha\gamma}.
\]
What one then obtains is \emph{\v{C}ech $2$-cochain} with coefficients in $\{\pm 1\}$,\newline $w_2\lp E,\{ U_{\alpha}\}\rp\in C^2\lp \{U_{\alpha}\},\{\pm1\}\rp$ defined by
\[
    w_2\lp E,\{U_{\alpha}\}\rp\lp U_{\beta\gamma\delta}\rp:=\tilde{g}_{\beta\gamma}\tilde{g}_{\gamma\delta}\tilde{g}_{\beta\delta}^{-1}.
\]
One can check that this is in fact a $2$-cocycle. In general, the \v{C}ech cohomology class of this cocylce is the \emph{second Steifel-Whitney class} of the Euclidean vector bundle $(E,g_E)$ and is non-trivial in general. In the case when $w_2\lp E,\{ U_{\alpha}\}\rp$ is \emph{exact}, i.e. when $w_2\lp E,\{ U_{\alpha}\}\rp=\delta \tau$ for $\tau\in C^1\lp\{ U_{\alpha}\},\{\pm 1\}\rp$, then this allows us to \emph{adjust} our choice of patching data for $U_{\alpha}\times\lp S_r,g_{S_r}\rp$ along $U_{\alpha}\cap U_{\beta}$ by assigning
\[
    \tau\lp U_{\alpha\beta}\rp\tilde{g}_{\alpha\beta}:U_{\alpha}\cap U_{\beta}\rightarrow Spin(r)
\]
as the new patching data. One now checks that $w_2\lp E,\{U_{\alpha}\}\rp=\delta\tau$ ensures that the adjusted patching data satisfies the cocylce condition.\par
An alternative perspective to the gluing of local spinor modules can be achieved through the use of $\bR$-gerbes. As above, each local oriented Euclidean framing $e_{\alpha}$ of $P_{SO}\lp E,g_E,\cal_{\bR}\rp$ gives us an associated spinor module
\[
    C\ell^{\bullet}\lp E,g_E\rp\big|_{U_{\alpha}}\circlearrowright_{e_{\alpha}}\lp\Sc_{e_{\alpha}},g_{\Sc}\rp\circlearrowleft\bK^{\bullet}_r.
\]
On a given intersection $U_{\alpha\beta}$, we obtain a real line bundle
\[
    L_{\alpha\beta}:=\End_{C\ell,\bK^0_r}\lp\Sc_{e_{\alpha}},\Sc_{e_{\beta}}\rp
\]
and on a three fold intersection $U_{\alpha\beta\gamma}$, composition of linear maps gives an isomorphism 
\[
    c_{\alpha\beta\gamma}:L_{\alpha\beta}\big|_{U_{\alpha\beta\gamma}}\otimes_{\bR}L_{\alpha\beta}\big|_{U_{\alpha\beta\gamma}}\xlongrightarrow{\sim}L_{\alpha\gamma}\big|_{U_{\alpha\beta\gamma}}.
\]
We denote this family of $\bR$-line bundles by
\[
    \End_{C\ell,\bK_r^0}\lp\Sc_{\cdot},\Sc_{\cdot}\rp\rightarrow U_{\cdot,\cdot}
\]
and call this the \textbf{\emph{spin orientation $\bR$-gerbe associated to the trivializing cover $\{ U_{\alpha}\}$}}. An $\bR$-gerbe $L_{\cdot,\cdot}\rightarrow U_{\cdot,\cdot}$ admits a \emph{global non-vanishing section} if each $L_{\alpha\beta}\rightarrow U_{\alpha\beta}$ admits a non-vanishing section $s_{\alpha\beta}$ such that
\[
    c_{\alpha\beta\gamma}\lp s_{\alpha\beta}\otimes s_{\beta\gamma}\rp=s_{\beta\gamma}.
\]
A choice of non-vanishing section $s_{\alpha\beta}\in\End_{C\ell,\bK^0_r}\lp\Sc_{\alpha},\Sc_{\beta}\rp$ is equivalent, upon rescalling if necessary, to a choice of lifting $\tilde{g}_{\alpha\beta}:U_{\alpha\beta}\rightarrow Spin(r)$ of the transition date $g_{\alpha\beta}:U_{\alpha\beta}\rightarrow SO(r)$. Thus, we see that \emph{the oriented Euclidean bundle $(E,g_E,\cal_{\bR})$ admits a spin structure if and only if the spin orientation $\bR$-gerbe associated to the trivializing cover $\{ U_{\alpha}\}$ admits a global non-vanishing section}. One should compare this viewpoint on the existence of spin structures to the fact that a vector bundle $E\rightarrow M$ admits an orientation if and only if the $\bR$-orientation line bundle $\wedge^{top}_{\bR}E\rightarrow M$ admits a global section.\par
There are instances when one is working with a cover $\{U_{\alpha}\}$ which is trivializing but whose intersections may not all be contractible. We can still construct the $\bR$-gerbe $\End_{Cl,\bK^0_r}\lp\Sc_{\cdot},\Sc_{\cdot}\rp\rightarrow U_{\cdot,\cdot}$ however what can occur now that is that for a given $U_{\alpha,\beta}$, the line bundle $\End_{C\ell,\bK^0_r}\lp \Sc_{\alpha},\Sc_{\beta}\rp\rightarrow U_{\alpha\beta}$ may not admit a global section. Suppose however that we know in advance that our bundle $\lp E,g_E,\cal_{\bR}\rp\rightarrow M$ admits a global spinor bundle
\[
    C\ell^{\bullet}\lp E,g_E\rp\circlearrowright\lp\Sc_E,g_{\Sc}\rp\circlearrowleft\bK^{\bullet}_r
\]
and that each $U_{\alpha}$ is contractible. Then on each $U_{\alpha}$, the line bundle
\[
    \End_{C\ell,\bK^0_r}\lp \Sc_E\big|_{U_{\alpha}},\Sc_{\alpha}\rp\rightarrow U_{\alpha}
\]
is trivial. Moreover along a given intersection $U_{\alpha\beta}$ we have an isomorphism of line bundles
\begin{align*}
    \End_{C\ell,\bK^0_r}\lp\Sc_{\alpha},\Sc_E\big|_{U_{\alpha}}\rp\big|_{U_{\alpha\beta}}\otimes_{\bR}\End_{C\ell,\bK^0_r}\lp\Sc_E\big|_{U_{\beta}},\Sc_{\beta}\rp\big|_{U_{\alpha\beta}}\xrightarrow{\sim}\End_{C\ell,\bK^0_r}\lp\Sc_{\alpha},\Sc_{\beta}\rp.
\end{align*}
Thus, we have the following observation: \emph{Suppose $\lp E,g_E,\cal_{\bR}\rp\rightarrow M$ is a spin oriented Euclidean bundle and $\{U_{\alpha}\}$ is a trivializing open cover with each $U_{\alpha}$ contractible. Then for any choice of local spinor module $\Sc_{\alpha}\rightarrow U_{\alpha}$, the line bundles $\End_{C\ell,\bK^0_r}\lp\Sc_{\alpha},\Sc_{\beta}\rp\rightarrow U_{\alpha\beta}$ must be trivial along $U_{\alpha\beta}$ for all $\alpha,\beta$}. This produces a useful criteria for determining when an oriented Euclidean vector bundle \emph{does not} admit a global bundle of spinors.
\subsection{Spinors and Real Line Bundles}
Recall that for a given orientable Euclidean vector bundle $\lp E,g_E\rp\rightarrow M$, the set of orientations is acted on freely and transitively by the \emph{zeroth cohomology} of $M$ with $\bZ_2$-coefficients $H^0\lp M,\bZ_2\rp$. Similarly, there is typically not a \emph{unique} choice of spin structure for an oriented Euclidean bundle $\lp E,g_E,\cal_{\bR}\rp\rightarrow M$, instead the isomorphism classes of possible spin structures are acted on freely and transitively by the \emph{first cohomology} with $\bZ_2$-coefficients $H^1\lp M,\bZ_2\rp$ when $M$ is a finite CW complex. We give a brief treatment of this fact from the perspective of spinor bundles. Suppose that $\Sc_E\rightarrow M$ is a fixed real spinor bundle with all of the bells and whistles. Suppose that $\lp L,g_L\rp\rightarrow M$ is a Euclidean line bundle over $M$. Then we can \emph{twist $\Sc_E$ by $L$} 
\[
    \lp L,g_L\rp\otimes_{\bR}\lp \Sc_E,g_{\Sc}\rp\rightarrow M.
\]
While there are instances when this bundle is isomorphic to the original $\Sc_E$ at the level of $\bR$-vector bundles, if we consider the action of $C\ell(E,g_E)$ on $L\otimes_{\bR}\Sc_E$ defined by
\[
    c(v)\cdot a\otimes s:=a\otimes c(v)\cdot s
\]
then this gives us a new real spinor bundle. Moreover, one has 
\[
    L\simeq \End_{C\ell,\bK_n^0}\lp L\otimes_{\bR} \Sc_E,\Sc_E\rp.
\]
Thus there will be a global identification of $L\otimes_{\bR}\Sc_E$ with $\Sc_E$ as bundles of $C\ell^{\bullet}(E,g_E)-\underline{\bK_n^{\bullet}}$ bimodules if and only if $L$ is trivial. Under the natural identification of the \emph{real Picard group} with $H^1\lp M,\bZ_2\rp$ via the first Steifel-Whitney class
\[
    w_1:\lp \text{Vect}^1_{\bR}\lp M\rp,\otimes_{\bR}\rp\xrightarrow{\sim}H^1\lp M,\bZ_2\rp
\]
we have recovered the action of the real Picard group on the set of spin structures of $\lp E,g_E,\cal_{\bR}\rp$. We leave it to the reader to check that this action is free and transitive. 
\section{Novelties}
We now collect a few novel low dimensional constructions from our angle and include a discussion on the tensor product of spinor fields.
\subsection{The Spin Geometry of a Surface}
In the same way that much of the intuition for the \emph{Levi-Civita connection} is derived from studying surfaces $\Sigma$ embedded in $\bR^3$, we will use our explicit spinor representations to unravel the spin geometry of an oriented $\lp\Sigma,\cal\rp\hookrightarrow \bR^3$. Thus the vector bundle in consideration will be $T\Sigma\rightarrow \Sigma$ and the metric will be the \emph{first fundamental form} of $\Sigma$, denoted $g_{\Sigma}$. Firstly, note that the orientation of $\Sigma$ gives a global normal vector field $\nu:\Sigma\rightarrow\bR^3$ which gives a global trivialization of
\begin{align*}
    \wedge^1T\Sigma\oplus\wedge^2 T\Sigma&\xlongrightarrow{\sim}\underline{\bR^3}\\
    \lp \vec{v},a\boldsymbol{\nu}\rp&\longmapsto \vec{v}+a\nu.
\end{align*}
Note that we may treat $\bR^3\simeq \text{Im}\bH$. Given that the metric $g_{\Sigma}$ is simply the restriction of $g_{\bR^3}$ to $T\Sigma$, the above map gives an identification of bundles of algebras
\[
    C\ell\lp T\Sigma,g_{\Sigma}\rp\simeq \underline{\bH}.
\]
Thus if we take $\Sc_{\Sigma}:=\underline{\bH}$ with the standard quaternic metric 
\[
    g_{\bH}\lp q,w\rp:=\overline{q}\cdot w,
\]
then we have almost obtained a real spinor module
\[
    C\ell\lp T\Sigma,g_{\Sigma}\rp\circlearrowright \lp\underline{\bH},\text{Re}g_{\bH}\rp\circlearrowleft\bH.
\]
In dimension $n=2$, the intertwiner algebra $\bK^0_2\simeq M_2\lp\bC\rp$, thus there should be a natural action of $M_2\lp\bC\rp$ on $\underline{\bH}$ which commutes with the right action of $C\ell^0(T\Sigma,g_{\Sigma})$. Given a point $p\in\Sigma$, note that
\[
    C\ell^0(T\Sigma,g_{\Sigma})_p\simeq \text{Span}\{1,\nu_p\}\simeq\bC.
\]
Because the action of $A\in M_2\lp\bC\rp$ on $\bC^2\simeq\bH$ is of the form
\[
    A\cdot q=k\cdot q\cdot w+q\cdot w',\quad w,w'\in\bH,
\]
we see that the action of $A\in M_2\lp\bC\rp$ on $\underline{\bH}_p$ is
\[
    A\cdot_p q:=\nu_p\cdot q\cdot w+q\cdot w'.
\]
This gives us a spinor bundle with the desired properties
\[
    C\ell^{\bullet}\lp T\Sigma,g_{\Sigma}\rp\circlearrowright\lp\underline{\bH},\text{Re}g_{\bH}\rp\circlearrowleft\bK^{\bullet}_2.
\]
We obtain an induced principal $Spin(2)$ lift of the special orthogonal frame bundle
\[
    P_{Spin}\lp\underline{\bH},\text{Re}g_{\bH}\rp\circlearrowleft Spin(2)\rightarrow P_{SO}\lp T\Sigma,g_{\Sigma},\cal_{\bR}\rp\circlearrowleft SO(2).
\]
Suppose that $x\in\Sigma$ is fixed and we are given an oriented orthonormal frame $e_1,e_2\in T_x\Sigma$. Then there is a unique $R\in SO(3)$ such that 
\[
    R\cdot[i\hspace{.1cm} j\hspace{.1cm}k]=[e_1\hspace{.1cm}e_2\hspace{.1cm}\nu_x],
\]
where $i,j,k$ are the standard quaternic basis for $\bR^3=\text{Im}\bH$. Let $g\in \mathbb{S}^3$ be viewed as a unit quaterion such that for every $q\in \text{Im}\bH$ one has
\[
    g\cdot q\cdot g^{-1}=Rq.
\]
Then the \emph{spin coordinate system corresponding to $g$} is the isometry
\begin{align*}
    \Phi_g:\lp\bH,\text{Re}g_{\bH}\rp&\xrightarrow{\sim}\lp\bH,\text{Re}g_{\bH}\rp\\
    q&\longmapsto g\cdot q.
\end{align*}
Given $v=ai+bj$, we have
\[
    \Phi_g\lp v\cdot q\rp=g\cdot v\cdot q=g\cdot v\cdot g^{-1}\cdot g\cdot q=\lp ae_1+be_2\rp\Phi_g(v)
\]
and for $w\in\bH$ and $A\in M_2\lp\bC\rp$ expressed as $A=k_La_R+b_R$ for $a,b\in\bH$, we have
\[
    \Phi_g(q\cdot w)=\Phi_g(q)w,\quad \Phi_g\lp\lp k_La_R+b_R\rp w \rp=\lp\nu_La_R+b_R\rp\Phi_g\lp w\rp
\]
which shows that $\Phi_g$ is indeed a spin coordinate system. Note that the other choice of spin coordinate system covering the frame $\{ e_1,e_2\}$ would be $\Phi_{-g}$.
\subsection{Parallel Translation of Spinors}
We briefly recall E. Cartan's \emph{moving frame} approach to the Levi-Civita connection. Suppose $\gamma:[a,b]\rightarrow \Sigma$ is a smooth curve and $t_0\in(a,b)$. Then if $V_t\in\Gamma\lp \gamma^*T\Sigma\rp$, the \emph{covariant derivative of $V_t$ along $\gamma$} is defined as
\[  
    D_tV_t:=P^{T\Sigma}\frac{d}{dt}V_t.
\]
In the above, we are treating $V_t:[a,b]\rightarrow \bR^3$ to take $\frac{d}{dt}$ in the ordinary sense, and $P^{T\Sigma}$ is the projection of $\bR^3$ to $T\Sigma$. We say that $V_t$ is \emph{parallel along $\gamma$} if $D_tV_t\equiv 0$. Given $t_0\in(a,b)$ and $v\in T_{\gamma(t_0)}\Sigma$, ODE theory grants the existence and uniqueness of a \emph{parallel transport} of $v$ along $\gamma$, i.e. $V_t\in\Gamma\lp\gamma^*T\Sigma\rp$ with
\[
    D_tV_t\equiv 0,\quad V_{t_0}=v.
\]
For a given oriented orthonormal frame $\{ e_1,e_2\}$ in $T_{\gamma(t_0)}\Sigma$, the Levi-Civita connection of $\lp \Sigma,g_{\Sigma}\rp$ instructs us on how to parallel translate the initial frame along $\gamma$ giving us a parallel section $\{ e_1(t),e_2(t)\}$ of $P_{SO}\lp T\Sigma,g_{\Sigma},\circlearrowright_{\bR}\rp$. If we view $\{ e_1(t),e_2(t)\}$ as a curve in $P_{SO}\lp T\Sigma,g_{\Sigma},\circlearrowleft\rp$, then this curve defines the \emph{horizontal lift of $\gamma$ through the frame $\{ e_1,e_2\}$} and we will denote this as $\gamma^{H}_{\{e_1,e_2\}}:[a,b]\rightarrow P_{SO}\lp T\Sigma,g_{\Sigma},\circlearrowleft\rp$.\par
As above, let $R_{\{e_1,e_2\}}:[a,b]\rightarrow SO(3)$ be be the unique smooth map which satisfies
\[
    R_{\{e_1,e_2\}}(t)[i\hspace{.1cm}j\hspace{.1cm}k]=[e_1(t)\hspace{.1cm}e_2(t)\hspace{.1cm}\nu_{\gamma(t)}]
\]
Then by the \emph{path lifting property} of the double covering $\mathbb{S}^3\rightarrow SO(3)$, we have two possible smooth choices of path lifting $\pm g_t:[a,b]\rightarrow\mathbb{S}^3$ such that for $v\in\bR^3$
\[
    R_{\{ e_1,e_2\}}(t)(v)=\pm g_t\cdot v\cdot \lp\pm g_t\rp^{-1}.
\]
Making a choice of $\pm$, gives a \emph{parallel spin coordinate system along $\gamma$}, $\Phi_{g(t)}:[a,b]\rightarrow P_{Spin}\lp\underline{\bH},\text{Re}g_{\bH}\rp$. If we are given a fixed spinor $q\in\underline{\bH}_{\gamma(t_0)}$, then there is a unique $q_0\in\bH$ such that
\[
    \Phi_{g(t_0)}\lp q_0\rp=q.
\]
The \emph{parallel transport of the spinor $v$ along $\gamma$} is then defined to be
\[
    q(t):=\Phi_{g(t)}\lp q_0\rp.
\]
If we view the parallel propagation of spin coordinate systems as a method for lifting paths from $\Sigma$ to $P_{Spin}\lp \underline{\bH},\text{Re}g_{\bH}\rp$, then this defines the \textbf{\emph{spin Levi-Civita connection}} on spinor fields.
\begin{example*}
Consider the unit sphere $\bS^2=\bigl\{(x,y,z)\in\bR^3\big|\quad x^2+y^2+z^2=1\bigr\}$. We consider the geodesic $\gamma(t)=\lp \sin(2\pi t),0,\cos(2\pi t)\rp$ with initial framing $\{ e_1(0),e_2(0)\}=\{ i,j\}$. Then for $\gamma^H_{\{e_1,e_2\}}(t)=\{ e_1(t),e_2(t)\}$ we have
\[
    R_{\{ e_1,e_2\}}(t)=\begin{bmatrix}
        \cos(2\pi t) & 0 & \sin(2\pi t)\\
        0 & 1 & 0\\
        -\sin(2\pi t) & 0 & \cos(2 \pi t)
    \end{bmatrix}
\]
and we can choose as our lift $g:[0,1]\rightarrow\bS^3$ as
\[
    g(t)=\exp\lp \pi jt\rp.
\]
In this case, if we are given an initial spinor $q\in\underline{\bH}_{\gamma(0)}$ and $q=g_0\cdot q_0$, then the parallel propagation of $q$ along $\gamma$ will be
\[
    q(t)=\exp(\pi jt)\cdot q_0.
\] One rather interesting consequence of this, is that if we treat a tangent vector, such as $i$ at $\gamma(0)$ as a spinor, then the spin parallel transport will be
\[
    i(t)=\exp(\pi jt)\cdot i=\cos(\pi t)i-\sin(\pi t)k
\]
which does \emph{not} remain tangent to the surface.\qed\par
\end{example*}
The above calculation demonstrates that on a general Riemannian $2$-fold $\lp \Sigma,g_{\Sigma}\rp$, if we take the spinor bundle to be $\Sc_{\Sigma}=\wedge T\Sigma\simeq \underline{\bH}$, then a \emph{constant spinor} $q:\Sigma\rightarrow \bH$, $q(x)=q$ for $x\in\Sigma$, is not typically going to be \emph{parallel with respect to the spin Levi-Civita connection}. Even more drastic, the volume form $\nu$, although parallel with respect to the Levi-Civita connection, will in general not be parallel with respect to the spin Levi-Civita connection.\par
If the surface $\Sigma$ has non-trivial $H^1\lp\Sigma,\bZ_2\rp$, then the above spinor module will not be the only possible choice. Although $\Sigma$ may not be compact, we may still use 
\[
    H^1\lp\Sigma,\bZ_2\rp\simeq \lp \text{Vect}^1_{\bR}\lp\Sigma\rp,\otimes_{\bR}\rp.
\]
The latter is parameterized by the set of homotopy classes of maps $[\Sigma,\bR P^{\infty}]$, which by Sard's theorem may be reduced to the set of homotopy classes of maps $[\Sigma,\bR P^3]\simeq[\Sigma,SO(3)]$. Given a map $R:\Sigma\rightarrow SO(3)$, we may define a new action of $C\ell(\Sigma,g_{\Sigma})$ on $\underline{\bH}$ by
\begin{align*}
    C\ell\lp\Sigma,g_{\Sigma}\rp\times\underline{\bH}&\longrightarrow\underline{\bH}\\
    (v,q)&\longmapsto \lp Rv\rp\cdot q.
\end{align*}
\subsection{Products of Spinors}
It is well known that the existence of non-trivial parallel spinors give rise to interesting geometric structures such as the existence of Ricci-flat Kähler manifolds, and holnomoy reductions of the tangent bundle to $G_2$ and $Spin(7)$ (see Chapter $4$ of \cite{LM} for survey of results in this area). One of the main techniques in this direction is the ``squaring" operation on spinors. We discuss this operation now.\par
For a general rank $r$ oriented Euclidean vector bundle $\lp E,g_E,\circlearrowleft\rp\rightarrow M$ equipped with a spinor bundle
\[
    C\ell^{\bullet}\lp E,g_E\rp\circlearrowright\lp\Sc_E,g_{\Sc}\rp\circlearrowleft\bK^{\bullet}_r
\]
we have that
\[
    C\ell\lp E,g_E\rp\simeq \End^R_{\bK_r}\lp \Sc_E\rp, (\text{ $r\neq3$ mod 4}),\quad C\ell^0\lp E,g_E\rp\simeq \End^R_{\bK_r}\lp\Sc_E\rp, (\text{ $r=3$ mod 4} ).
\]
Note furthermore that in each various dimension, the spinor metric $g_{\Sc}$ can be extended to a fiber Hermitian, or fiber hyper-Hermitian metric when $\bK_r\simeq\bC$ or $\bK_r\simeq\bH$, denoted $g^{\bK_r}_{\Sc}$. In this case, if $s\in\Gamma\lp\Sc_E\rp$ then we let
\[
    \overline{s}:=g^{\bK_r}_{\Sc}\lp s,\cdot\rp\in\Gamma\lp\Hom^R_{\bK_r}\lp\Sc_E,\bK_r\rp\rp
\]
denote the \textbf{\emph{$\bK_r$-dual of $s$}}. Observe that $\Hom^R_{\bK_r}\lp\Sc_E,\bK_r\rp$ is naturally a left $\bK_r$-module. In this case, if we are given two spinor fields $s_1,s_2$, we may take their \emph{tensor product} and obtain 
\[
    s_1\otimes_{\bK_r}\overline{s}_2\in\End^R_{\bK_r}\lp \Sc_E\rp
\]
and thus there exists a unique mixed degree differential form $\omega\in\wedge_{\bR}E$ such that
\[
    c(\omega)=s_1\otimes_{\bK_r}\overline{s}_2.
\]
If $\lp E,g_E,\cal_{\bR}\rp\rightarrow M$ comes equipped with a metric compatible connection $\nabla^E$, then we let $\nabla^{\Sc_E}$ denote the associated spin connection on $\Sc_E$. The bundle $\Hom^R_{\bK_r}\lp \Sc_E,\bK^r\rp$ has a canonical connection induced by $\nabla^{\Sc_E}$, which we denote by $\nabla^{\Hom^R_{\bK_r}\lp\Sc_E,\bK_r\rp}$. Then we have
\[
    \nabla^{\wedge E}\lp s_1\otimes_{\bK_r}\overline{s}_2\rp=\nabla^{\Sc_E}s_1\otimes_{\bK_r}\overline{s}_2+s_1\otimes_{\bK^r}\nabla^{\Hom^R_{\bK_r}\lp\Sc_E,\bK_r\rp}\overline{s}_2.
\]
Suppose now that $(M,g)$ is an oriented Riemannian manifold which admits a global family of spinors 
\[
    C\ell^{\bullet}\lp TM,g\rp\circlearrowright\lp \Sc_{TM},g_{\Sc}\rp\circlearrowleft\bK^{\bullet}_r.
\]
In this case, if $\slashed{D}$ denotes the \textbf{\emph{spinor Dirac operator associated to the spin Levi-Civita connection}} given by
\[
    \slashed{D}=c\circ\nabla^{\Sc}
\]
then for spinor fields $s_1$ and $s_2$ we have
\[
    \lp\slashed{D}s_1\rp\otimes_{\bK_r}\overline{s}_2=\lp d+d^*\rp\lp s_1\otimes_{\bK_r}\overline{s}_2\rp-c\lp s_1\otimes_{\bK_r}\nabla^{\Hom^R_{\bK_r}\lp\Sc_M,\bK_r\rp}\overline{s}_2\rp.
\]
The above relationship between $\slashed{D}$ and $d+d^*$ given above, along with the fact that the operators $d$ and $d^*$ satisfy their own local fundamental theorem of calculus, gives us a macroscopic method for computing the Dirac operator. Recall that the \emph{flux of $\omega\in\Omega^kM$ along an oriented submanifold $\lp S,\cal\rp$ of dimension $n-k$} is defined as
\[
    \int^{\F}_{\lp S,\cal\rp}\omega:=\lp-1\rp^{k(n-k)}\int_{\lp S,\cal\rp}\star\omega
\]
and $d^*$ satisfies \emph{Gauss's theorem}
\[
    \int^{\F}_{\lp S,\cal\rp}d^*\omega=(-1)^k\int^{\F}_{\lp \partial S,\partial\cal\rp}\omega,\quad\omega\in\Omega^{n-k+1}M.
\]
\begin{proposition}
    Let $(M,g,\cal)$ be spin oriented with spinor bundle $\lp\Sc_M,g_{\Sc}\rp\rightarrow M$ and let $s\in\Gamma\lp\Sc_M\rp$ be parallel at the point $p\in M$. Suppose $(S,\cal_{\bR})\leqslant T_pM$ is a $k$-dimensional subspace of $T_pM$ and let $(S_r,\cal):=\exp_p\lp\{v\in S\big|\quad |v|<r\},\cal\rp$. If $dV_S\in\wedge T_pM$ denotes the volume multi-vector of $\lp S,\cal_{\bR}\rp$, then for a given spinor field $t$ we have
    \[
        \la\slashed{D}t\otimes_{\bK_r}\overline{s},dV_S\ra=\lim_{r\rightarrow 0^+}\frac{1}{\textup{Vol}(S_r)}\int_{\lp\partial S_r,\partial\cal\rp}t\otimes_{\bK_r}\overline{s}+\frac{(-1)^{n(k+1)}}{\textup{Vol}(S^{\perp}_r)}\int^{\F}_{\lp\partial S^{\perp}_r,\partial \cal^{\perp}\rp} t\otimes_{\bK_r}\overline{s}.
    \]
\end{proposition}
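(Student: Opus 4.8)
The plan is to reduce the statement to the identity relating $\slashed{D}$ to $d + d^{*}$ established above, and then to evaluate the two resulting pieces by infinitesimal forms of Stokes' and Gauss's theorems. Write $\omega := t\otimes_{\bK_r}\overline{s}$, regarded via the symbol isomorphism $\End^R_{\bK_r}(\Sc_M)\cong C\ell(TM,g)\cong\wedge_{\bR}TM$ as a globally defined, mixed-degree differential form, and let $\omega^{(j)}$ denote its degree-$j$ component. Since the spin connection $\nabla^{\Sc}$ is metric and $s$ is parallel at $p$, the $\bK_r$-dual $\overline{s}=g^{\bK_r}_{\Sc}(s,\cdot)$ is parallel at $p$ as well, so the correction term $c\lp t\otimes_{\bK_r}\nabla^{\Hom^R_{\bK_r}(\Sc_M,\bK_r)}\overline{s}\rp$ in the displayed identity preceding the proposition vanishes at $p$. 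Hence at $p$ we have $\slashed{D}t\otimes_{\bK_r}\overline{s}=d\omega+d^{*}\omega$; pairing with the degree-$k$ multivector $dV_S$ picks out the degree-$k$ component, giving $\la\slashed{D}t\otimes_{\bK_r}\overline{s},dV_S\ra=\la d\omega^{(k-1)}|_p,dV_S\ra+\la d^{*}\omega^{(k+1)}|_p,dV_S\ra$.

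For the first summand I would use the elementary averaging fact that, for any continuous $\alpha\in\Omega^k M$, one has $\lim_{r\to 0^{+}}\mathrm{Vol}(S_r)^{-1}\int_{(S_r,\cal)}\alpha=\la\alpha|_p,dV_S\ra$: the disks $S_r=\exp_p(\{v\in S:|v|<r\})$ are tangent to $S$ at $p$, and both they and their induced volume elements differ from the flat ones in $T_pM$ only at higher order in $r$, so this limit is controlled by the classical density argument. Applying this with $\alpha=d\omega^{(k-1)}$ and then Stokes' theorem on the $k$-manifold $(S_r,\cal)$ with boundary $(\partial S_r,\partial\cal)$ yields $\la d\omega^{(k-1)}|_p,dV_S\ra=\lim_{r\to 0^{+}}\mathrm{Vol}(S_r)^{-1}\int_{(\partial S_r,\partial\cal)}\omega^{(k-1)}$; since only the degree-$(k-1)$ part of $\omega$ restricts nontrivially to the $(k-1)$-sphere $\partial S_r$, this equals the first term on the right-hand side of the proposition.

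For the second summand I would route through the Hodge star. Because $\star$ is an isometry on forms and $\star\, dV_S=dV_{S^{\perp}}$ when $\cal^{\perp}$ is the orientation of $S^{\perp}$ normalized so that $(\cal,\cal^{\perp})$ orients $T_pM$, we have $\la d^{*}\omega^{(k+1)}|_p,dV_S\ra=\la\star\, d^{*}\omega^{(k+1)}|_p,dV_{S^{\perp}}\ra$. Averaging over the $(n-k)$-disks $S^{\perp}_r=\exp_p(\{v\in S^{\perp}:|v|<r\})$ as before, then rewriting $\int_{(S^{\perp}_r,\cal^{\perp})}\star\, d^{*}\omega^{(k+1)}$ through the flux of the degree-$k$ form $d^{*}\omega^{(k+1)}$ over the $(n-k)$-submanifold $S^{\perp}_r$, then applying Gauss's theorem (with $k$ replaced by $n-k$) to pass to $\int^{\F}_{(\partial S^{\perp}_r,\partial\cal^{\perp})}\omega^{(k+1)}$, and finally noting that only the degree-$(k+1)$ part of $\omega$ contributes to a flux over the $(n-k-1)$-sphere $\partial S^{\perp}_r$, one collects the sign $(-1)^{k(n-k)}$ from the definition of flux and $(-1)^{n-k}$ from Gauss; their product is $(-1)^{k(n-k)+n-k}=(-1)^{(n-k)(k+1)}=(-1)^{n(k+1)}$ since $k(k+1)$ is even, which is exactly the sign appearing in the statement. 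Adding the two contributions completes the proof. I expect the main obstacle to be precisely this last sign and orientation bookkeeping — one must fix the convention for $\cal^{\perp}$ and verify that it is compatible with the definitions of flux and of Gauss's theorem used in the paper — together with the routine but genuine verification that exponentiated flat disks average differential forms correctly to leading order.
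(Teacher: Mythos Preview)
Your proposal is correct and follows precisely the route the paper indicates: the paper does not supply an explicit proof of this proposition, but states it immediately after establishing the identity $(\slashed{D}t)\otimes_{\bK_r}\overline{s}=(d+d^*)(t\otimes_{\bK_r}\overline{s})-c\bigl(t\otimes_{\bK_r}\nabla^{\Hom^R_{\bK_r}(\Sc_M,\bK_r)}\overline{s}\bigr)$ and recalling the flux formulation of Gauss's theorem for $d^*$, leaving the details to the reader. Your reduction --- kill the correction term via parallelism of $s$ at $p$, isolate the degree-$k$ component against $dV_S$, average over the geodesic disks $S_r$ and $S^{\perp}_r$, then apply Stokes and Gauss respectively --- is exactly the intended argument, and your sign bookkeeping $(-1)^{k(n-k)}\cdot(-1)^{n-k}=(-1)^{(n-k)(k+1)}=(-1)^{n(k+1)}$ is correct.
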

If we are given a locally defined $\nabla^{\Sc}$-parallel spinor field, we have
\[
    \lp\slashed{D}s_1\rp\otimes_{\bK_r}\overline{s}_2=\lp d+d^*\rp\lp s_1\otimes_{\bK_r}\overline{s}_2\rp.
\]
One should note that the existence of a parallel spinor field places \emph{strong conditions} on the metric $g$ as can be seen in \cite{Wan}. The most obvious obstruction is the scalar curvature via
the Lichnerowicz formula
\[
    \slashed{D}^2=\nabla^*\nabla+\frac{k}{4}.
\]
If there is a parallel spinor field along a neighborhood $U$ of $(M,g,\cal)$, then the scalar curvature $k$ must vanish along $U$.\par
The existence of a global parallel spinor field also gives a direct relationship between the spectral properties of $\slashed{D}$ and $d+d^*$.
\begin{proposition}
    Let $(M,g,\cal)$ be spin oriented with spinor bundle $\lp\Sc_M,g_{\Sc}\rp\rightarrow M$. If $\Sc_M$ admits a non-trivial parallel spinor $s$, then if $\lambda\in\bR$ is an eigenvalue of $\slashed{D}$, $\lambda$ is also an eigenvalue of $d+d^*$ and we have the inequality
    \[
        \textup{dim}_{\bR}\bK_r\cdot\textup{dim}_{\bK_r}E^{\slashed{D}}_{\lambda}\cdot\textup{dim}_{\bK_r}\H^{\slashed{D}}\leq \textup{dim}_{\bR}E^{d+d^*}_{\lambda}.
    \]
\end{proposition}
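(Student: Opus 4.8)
The plan is to leverage the squaring identity established above: whenever $s_2$ is $\nabla^{\Sc}$-parallel one has $(\slashed{D}s_1)\otimes_{\bK_r}\overline{s}_2=(d+d^*)(s_1\otimes_{\bK_r}\overline{s}_2)$ for every spinor field $s_1$. First I would record that a non-trivial parallel spinor $s$ has constant norm (the spin connection is metric) and hence is nowhere vanishing, so that the $\bR$-linear map $\Phi_s\colon t\mapsto\omega$, where $\omega\in\Gamma(\wedge_{\bR}TM)$ is the form with $c(\omega)=t\otimes_{\bK_r}\overline{s}$, is injective: if $t\otimes_{\bK_r}\overline{s}=0$ in $\End^R_{\bK_r}(\Sc_M)$ then evaluating this endomorphism on $s$ gives $t\cdot g^{\bK_r}_{\Sc}(s,s)=t\,|s|^2=0$, so $t=0$. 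Taking $s_1=t$ with $\slashed{D}t=\lambda t$ and $s_2=s$ in the identity shows $(d+d^*)\Phi_s(t)=\lambda\,\Phi_s(t)$, so $\Phi_s$ restricts to an $\bR$-linear injection $E^{\slashed{D}}_{\lambda}\hookrightarrow E^{d+d^*}_{\lambda}$; in particular every eigenvalue $\lambda$ of $\slashed{D}$ is an eigenvalue of $d+d^*$.

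For the sharp dimension bound I would let $\H^{\mathrm{par}}\subseteq\Gamma(\Sc_M)$ be the space of $\nabla^{\Sc}$-parallel spinors. The holonomy of $\nabla^{\Sc}$ lies in $\Spin(n)\subseteq C\ell^0_n$, which commutes with the right $\bK_r$-action (indeed with the right $\bK^0_r$-action), so $\H^{\mathrm{par}}$ is a right $\bK_r$-module on which $g^{\bK_r}_{\Sc}$ restricts to a positive (hyper-)Hermitian form; I fix a $\bK_r$-orthonormal basis $u_1,\dots,u_m$ of $\H^{\mathrm{par}}$, $m=\dim_{\bK_r}\H^{\mathrm{par}}$. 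The assignment $(t,s_2)\mapsto(\text{the form }\omega\text{ with }c(\omega)=t\otimes_{\bK_r}\overline{s}_2)$ is $\bK_r$-balanced, using that the dual $s_2\mapsto\overline{s}_2$ is conjugate-linear and so interchanges the right $\bK_r$-structure on $\H^{\mathrm{par}}$ with the left $\bK_r$-structure on $\overline{\H^{\mathrm{par}}}$; hence it descends to an $\bR$-linear map
\[
    \Theta\colon E^{\slashed{D}}_{\lambda}\otimes_{\bK_r}\overline{\H^{\mathrm{par}}}\longrightarrow E^{d+d^*}_{\lambda},
\]
the target membership being the parallel squaring identity applied to each $u_j$. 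Injectivity of $\Theta$ is the same computation as for $\Phi_s$: writing an element of the source as $\sum_j t_j\otimes_{\bK_r}\overline{u}_j$ with $t_j\in E^{\slashed{D}}_{\lambda}$, if $\sum_j t_j\otimes_{\bK_r}\overline{u}_j=0$ in $\End^R_{\bK_r}(\Sc_M)$ then evaluation on $u_k$ gives $\sum_j t_j\,g^{\bK_r}_{\Sc}(u_j,u_k)=t_k=0$ for all $k$. Since $\dim_{\bR}\bigl(E^{\slashed{D}}_{\lambda}\otimes_{\bK_r}\overline{\H^{\mathrm{par}}}\bigr)=\dim_{\bR}\bK_r\cdot\dim_{\bK_r}E^{\slashed{D}}_{\lambda}\cdot\dim_{\bK_r}\H^{\mathrm{par}}$, injectivity of $\Theta$ yields the asserted inequality with $\H^{\mathrm{par}}$ in place of $\H^{\slashed{D}}$.

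It then remains to identify $\H^{\mathrm{par}}=\H^{\slashed{D}}$. One inclusion is immediate, since a parallel spinor satisfies $\slashed{D}s=c(\nabla^{\Sc}s)=0$. For the reverse inclusion I would invoke the Lichnerowicz formula $\slashed{D}^2=\nabla^*\nabla+\tfrac{k}{4}$ recorded above: applied to the given global parallel spinor it forces $k\equiv0$ on all of $M$, so $\slashed{D}^2=\nabla^*\nabla$, and then on the (compact) $M$ one has $\H^{\slashed{D}}=\ker\slashed{D}=\ker\slashed{D}^2=\ker\nabla^{\Sc}=\H^{\mathrm{par}}$, the middle equalities coming from self-adjoint ellipticity of $\slashed{D}$ and the $L^2$ identity $\langle\nabla^*\nabla t,t\rangle=\|\nabla^{\Sc}t\|^2$. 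Substituting $\dim_{\bK_r}\H^{\mathrm{par}}=\dim_{\bK_r}\H^{\slashed{D}}$ completes the proof. I expect the only genuinely delicate point to be the $\bK_r$-module bookkeeping in defining $\Theta$ and checking its injectivity — the balancing relation, the correct handedness of the ``bar'' of the dual module when $\bK_r=\bH$, and the legitimacy of the (hyper-)Hermitian Gram--Schmidt producing $u_1,\dots,u_m$ — together with the standing compactness of $M$, which is what makes both spectra discrete and underlies the final identification $\H^{\slashed{D}}=\H^{\mathrm{par}}$.
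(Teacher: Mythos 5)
The paper states this proposition without proof, so there is nothing to compare your argument against directly; I'll instead assess it on its own terms. Your route — (i) the squaring identity with a parallel second factor to convert $\slashed{D}$-eigenspinors into $(d+d^*)$-eigenforms, (ii) injectivity of the squaring map against a nowhere-vanishing parallel spinor, (iii) a $\bK_r$-orthonormal frame of the space $\H^{\mathrm{par}}$ of parallel spinors to promote the scalar injection to a balanced-tensor injection $\Theta$ with the correct $\bR$-dimension count, and (iv) the Lichnerowicz argument to identify $\H^{\mathrm{par}}$ with the harmonic spinors $\H^{\slashed{D}}$ — is sound, and I believe it is the argument the author has in mind. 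The $\bK_r$-module bookkeeping you worry about is fine: the conjugate-linear bar exchanges the right $\bK_r$-action for the left one, the map $(t,\overline{s}_2)\mapsto c^{-1}(t\otimes_{\bK_r}\overline{s}_2)$ is $\bK_r$-balanced because $(ta)\cdot\overline{s}_2(w)=t\cdot(a\,\overline{s}_2(w))$, and parallelism of $g^{\bK_r}_{\Sc}$ under the spin connection makes the Gram matrix of parallel spinors constant so Gram–Schmidt applies. The injectivity computation (evaluate the vanishing endomorphism on $u_k$ and use orthonormality plus the nowhere-vanishing of $s$) is correct.

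Two points deserve explicit mention. First, your step (iv) genuinely requires $M$ closed (and connected): the equalities $\ker\slashed{D}=\ker\slashed{D}^2=\ker\nabla^*\nabla=\ker\nabla^{\Sc}$ all come from $L^2$ integration by parts, and this is the step that upgrades the inequality from $\dim_{\bK_r}\H^{\mathrm{par}}$ to $\dim_{\bK_r}\H^{\slashed{D}}$ — without it you have only $\H^{\mathrm{par}}\subseteq\H^{\slashed{D}}$, which points the wrong way. Since the proposition speaks of eigenvalues of $\slashed{D}$ and $d+d^*$, compactness is evidently a standing hypothesis, but it is worth stating. Second, the constancy (and hence non-vanishing) of $|s|^2$ uses connectedness of $M$; on a disconnected manifold a parallel spinor could vanish identically on a component and your evaluation argument would then fail there. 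Both are the kind of tacit hypotheses the paper leaves unstated, and you flag both, so I regard your proposal as essentially complete.
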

In dimensions $4k$, the spinor bundle is $\bH-\bZ_2$-graded or $\bR-\bZ_2$-graded and induces a grading on $\Hom^R_{\bK_r}\lp\Sc,\bK_r\rp$ which satisfies
\[
    \Sc_E^{(-1)^i}\otimes_{\bK_{4k}}\Hom^R_{\bK_{4k}}\lp\Sc_E,\bK_{4k}\rp^{(-1)^j}\rightarrow \bigoplus\wedge^{2l+(-1)^{i+j}}_{\bR}E.
\]
When $E=TM$ admits a non-trivial parallel spinor, we see that either $\text{dim}_{\bK_{4k}}\H^{\slashed{D},+}>0$ or $\text{dim}_{\bK_{4k}}\H^{\slashed{D},-}>0$. The above proposition then gives
\begin{align*}
    \text{dim}_{\bR}\bK_{4k}\cdot\text{dim}_{\bK_{4k}}\H^{\slashed{D},(-1)^i}\cdot\text{dim}_{\bK_r}\H^{\slashed{D},+}&\leq \dim_{\bR}\bigoplus H^{2i+(-1)^i}_{dR}(M,\bR),\quad \text{if }\text{dim}_{\bK_r}\H^{\slashed{D},+}>0\\
    \text{dim}_{\bR}\bK_{4k}\cdot\text{dim}_{\bK_{4k}}\H^{\slashed{D},(-1)^i}\cdot\text{dim}_{\bK_r}\H^{\slashed{D},-}&\leq \dim_{\bR}\bigoplus H^{2i+(-1)^i+1}_{dR}(M,\bR),\quad \text{if }\text{dim}_{\bK_r}\H^{\slashed{D},-}>0
\end{align*}
It is not clear if the above proposition is optimal. 
\subsection*{Spinors Along Oriented Hypersurfaces in $\bR^4$}
Suppose $\lp\Sigma,\cal\rp\hookrightarrow\bR^4$ is an oriented, regular submanifold of $\bR^4$, treat $\bR^4\simeq\bH$, and give $\Sigma$ the subspace metric $g_{\Sigma}$. If we let $\nu:\Sigma\rightarrow\bS^3$ denote the unit normal of $\lp\Sigma,\cal\rp$, then for a given $p\in\Sigma$, note that left quaternic multiplication by $\nu_p$ gives an isometric isomorphism
\[
    \nu_p\cdot_{\bH}:T_p\Sigma\xlongrightarrow{\sim}\text{Im}\bH.
\]
This gives us an action of $C\ell\lp T_p\Sigma,g_{\Sigma,p}\rp$ on the trivial quaternic bundle $\underline{\bH}$ via
\begin{align*}
    T_p\Sigma&\longrightarrow \End^R_{\bH}\lp\bH\rp\\
    v&\longmapsto\lp q\mapsto \nu_p\cdot v\cdot q\rp.
\end{align*}
If we use the standard quaternic metric on $\underline{\bH}$
\[
    g_{\bH}\lp q,w\rp:=\overline{q}\cdot w
\]
then we obtain a spinor module with all the bells and whistles
\[
    C\ell\lp T\Sigma,g_{\sigma}\rp\circlearrowright\lp\underline{\bH},\text{Re}g_{\bH}\rp\circlearrowleft\bH
\]
thus showing that every oriented hypersurface of $\bR^4$ admits a spin structure. In a similar vein as for oriented hypersurfaces in $\bR^3$, one can explicitly compute the Levi-Civita connection. A nice spin-off to this that left quaternic multiplication by $\nu$ gives a global trivialization of the tangent bundle of $\Sigma$.
\section{An Alternative Set of Representations}
At this point, one may inquire on whether or not one should consider thinking of spinors geometrically as quaternic multivectors in dimensions $=0$ (mod $4$), and some geometric cousin of these objects in other dimensions. The generality of our construction of spinor modules provides a method for probing this question; in particular we can ask if there is a distinct set of spinor modules which has a different \emph{geometric flavor} than the one presented with quaternions we have given above. We give an alternative construction of spinor modules in dimensions $1,2,3,\text{ and } 4$ which is based on decomposing the canonical representation of the Clifford algebra $C\ell_n$ on the exterior algebra $\wedge_{\bR}\bR^n$.
\subsection{An Alternative Set of Spinor Modules}
For $n=1$, we take as our irreducible module the Clifford algebra acting on the exterior algebra 
\[
    C\ell_1\circlearrowright\wedge_{\bR}\bR.
\]
In dimension $2$, we also take as our irreducible module the exterior algebra
\[
    C\ell_2\circlearrowright\wedge_{\bR}\bR^2.
\]\par
In dimension $3$, the action $C\ell_3\circlearrowright\wedge_{\bR}\bR^3$ is too large. We can slice this action in half by using the \emph{Hodge star operator $\star$} to identify $\wedge^0\bR^3\simeq_{\star}\wedge^3\bR^3$ and $\wedge^1\bR^3\simeq_{\star}\wedge^2\bR^2$. In which case this gives us the action
\[
    c_3(v)\lp\lambda,w\rp:=\lp -\la v,w\ra,\lambda v+\star v\wedge w\rp
\]
where $v,w\in\wedge^1\bR^3$ and $\lambda\in\wedge^0\bR^3$.\par
In dimension $4$, we can still identify $\wedge^0\bR^4\simeq_{\star}\wedge^4\bR^4$ and $\wedge^1\bR^4\simeq_{\star}\wedge^3\bR^4$. Since the splitting $\wedge^2\bR^4=\wedge^+\bR^4\oplus\wedge^-\bR^4$ is invariant under $\star$, we have to make a choice of summand in order to slice $\wedge\bR^4$ in half. Either choice will do, but we will choose the anti-self dual component $\wedge^-\bR^4$ and let $P_{\wedge^-}$ denote the projection onto the anti-self dual component. In this case
\[
    c_4(v)\lp\lambda,w,\tau\rp:=\lp-\la v,w\ra,\lambda v+\star v\wedge\tau-\iota_v\tau,P_{\wedge^-}v\wedge w\rp
\]
where $\lambda\in\wedge^0\bR^4$, $v,w\in\wedge^1\bR^4$, and $\tau\in\wedge^-\bR^4$.\par
If we are given a general $k$-multivector $\omega\in\wedge^k\bR^n$, the Hodge star satisfies
\[
    \omega\wedge_{\bR}\star\omega=|\omega|^2\boldsymbol{\nu}
\]
where $|\omega|^2$ is the $k$-volume of $\omega$ squared. From this perspective it seems that identifying a form with it's Hodge star is equivalent to taking a square root of the volume form. For this reason, we will call these representations the \textbf{\emph{Square Roots of Space}} model for spinors.\par
This method of constructing the irreducible representations of $C\ell_n$ by slicing $\wedge_{\bR}\bR^n$ in half using the hodge star operator clearly fails beyond dimension $4$, since 
\[
    \lp\text{dim}_{\bR}\wedge_{\bR}\bR^n\rp/2>\text{dim}_{\bR} S_n
\]
for $n>4$. However, our general recipe tells us that the better way to approach this method of constructing the square roots of space irreducible modules for $C\ell_n$ should be to first choose an oriented factoring 
\[
    \lp\bR^n,\cal_{\bR}\rp=\lp V_1,\cal_{\bR}\rp\oplus\cdots\lp V_k,\cal_{\bR}\rp\oplus\lp W,\cal_{\bR}\rp
\]
where $n=4k+r$, $\text{dim}_{\bR} V_j=4$ and $\text{dim}_{\bR} W=r$. From here one slices the forms within $\wedge_{\bR} V_j$ and $\wedge_{\bR}W$ using $\star$, and then takes appropriate tensor products of these square root representations. We will denote these representations by $\sqrt{\wedge_{\bR}\bR^n}$, and note that the choice of square root depends on the choice of square root necessarily depends on the choice of oriented splitting of $\lp\bR^n,\cal\rp$ given above. \par
These representations can be used as follows: Suppose that $\lp E,g_E,\circlearrowleft\rp\rightarrow M$ has rank $r\leq 4$. Then because in the above construction we only used the Hodge star operator $\star$ on multivectors, we can form the vector bundle $\sqrt{\wedge_{\bR}E}\rightarrow M$ which comes with an irreducible action of $C\ell(E,g_E)$
\[
    C\ell(E,g_E)\circlearrowright\sqrt{\wedge_{\bR}E}.
\]
However, this \emph{does not imply} that the oriented Euclidean bundle $(E,g_E,\circlearrowleft)\rightarrow M$ is spin. The problem in this case can be traced back to \emph{apriori} absence of the bells and whistles that a true bundle of spinors must have. To give an example, let $L\rightarrow\bR P^2$ denote the universal $\bR$-line bundle over $\bR P^2$. Then if we take $E:=L\oplus L$ and equip $E$ with an arbitrary metric $g_E$, then $E$ is indeed orientable since
\[
    \wedge^2_{\bR}E\simeq L\otimes_{\bR}L\simeq \underline{\bR}.
\]
However, the bundle of intertwiners of $\sqrt{\wedge_{\bR}E}$ is given by 
\[
    \End_{Cl}\lp\sqrt{\wedge_{\bR}E}\rp\simeq C\ell\lp E,g_E\rp\simeq \wedge_{\bR}E\simeq \underline{\bR^2}\oplus E
\]
which cannot be globally trivialized to $\underline{\bH}$ since this would imply $E$ is stably trivial, which it isn't.
\subsection{Yet Another Set of Representations}
For completeness, we include another set of spinor representations modeled on the Octonions. One can find these in \cite{LM}, \cite{E}, and in \cite{RB}. The first three representations we constructed 
\[
    C\ell_1\circlearrowright\bC,\quad C\ell_2\circlearrowright\bH,\quad C\ell_3\circlearrowright\bH,
\]
remain unchanged. If one loosens the definition of $\bR$-algebra and allow for non-associative product structures, then the next division algebra after the quaternions is the \textbf{\emph{Octonions}}, which are defined as
\[
    \bO:=\bH\oplus\bH
\]
with product and conjugation structure defined as
\[
    \lp a,b\rp\cdot\lp c,d\rp:=\lp ac-\overline{d}b,da+b\overline{c}\rp,\quad \overline{\lp a,b\rp}:=\lp \overline{a},-b\rp.
\]
The \textbf{\emph{real axis}} and \textbf{\emph{imaginary axis}} of $\bO$ are the subspaces
\[
    \text{Re}\bO:=\bigl\{\lp a,b\rp\in\bO\big|\text{ $a\in\bR$ and $b=0$}\bigr\},\quad \text{Im}\bO:=\bigl\{\lp a,b\rp\in\bO\big|\text{ $a\in\text{Im}\bH$ and $b\in\bH$}\bigr\}.
\]
Note that we can form the embedding
\begin{align*}
    c:\bH&\rightarrow \text{Im}\bO\\
    q&\mapsto\lp 0,q\rp
\end{align*}
which induces a map into operators on $\bO$
\begin{align*}
    R_c:\bH&\rightarrow\End_{\bR}\bO\\
    q&\mapsto\lp \lp a,b\rp\mapsto \lp a,b\rp\cdot\lp 0,q\rp\rp.
\end{align*}
Note then $R_c$ satisfies the Clifford condition
\[
    R_c\lp q\rp^2=-|q|^2
\]
and $R_c$ commutes with the natural right action of $\bH$ on $\bO$
\[
    \lp a,b\rp\cdot q:=\lp a\cdot q,b\cdot q\rp.
\]
Thus the image of $R_c$ lies within the smaller algebra of quaternionic linear maps on $\bO$
\[
    R_c:\bH\rightarrow \End_{\bH}\bO.
\]
The above maps induces an isomorphism of the Clifford algebra in dimension $4$
\[
    R_c:C\ell_r\xrightarrow{\sim}\End_{\bH}\bO.
\]
The careful reader may have noted that the above construction is really just a different way of book keeping the $C\ell_4$ representation on $\wedge_{\bH}\bH$, after all that is what it means to have isomorphic representations! The interesting thing is that the above trick of embedding $\bR^4$ into the imaginary part of $\bO$ extends through to $\bR^7$
\[
    c_k:\bR^k\rightarrow\text{Im}\bO,\quad 4\leq k\leq 7
\]
and gives an irreducible spinor module
\[
    C\ell_k\circlearrowright\bO,\quad 4\leq k\leq 7.
\]
One can see that if we want to extend this to $\bR^8$, we would need to embed into the imaginary part of some $\bR$-division algebra. One could perhaps turn to the \textbf{\emph{Sedonions}}, but at this point the non-associativity, as well as not possessing the property of being alternative encourages a different take. We can double the octonions $\bO\oplus\bO$ and define the map
\begin{align*}
    c_8:\bO&\rightarrow\End_{\bR}\bO\oplus\bO\\
    x&\mapsto \lp \lp u,v\rp\mapsto \lp -\overline{x}v,xu\rp\rp.
\end{align*}
Although the Octonions are not associative, they are alternative which implies that the above map satisfies
\[  
    c_8\lp x\rp^2=-|x|^2.
\]
Thus we have obtained an irreducible spinor representation
\[
    C\ell_8\circlearrowright \bO\oplus\bO.
\]
\section{The Case of General Signature Metrics}
We now take up the general task of giving a recipe for cooking up irreducible spinor modules for non-degenerate symmetric bilinear forms $g_{r,s}$, where $r$ is the number of $-1$'s in the matrix of $g_{r,s}$ relative to a $g_{r,s}$-orthonormal basis.\par
\subsection{The Signature $(n,0)$-case}
We first remark that the the Clifford algebras $C\ell_{n,0}$ also satisfy the $8$-fold Bott periodicity, however the algebras are distinct from their Euclidean cousins $C\ell_{0,n}$
\begin{align*}
    C\ell_{1,0}&\simeq\bR\oplus\bR,\quad
    C\ell_{2,0}\simeq M_2(\bR),\quad
    C\ell_{3,0}\simeq M_2(\bC),\quad
    C\ell_{4,0}\simeq M_2(\bH)\\
    C\ell_{5,0}\simeq &M_2(\bH)\oplus M_2(\bH),\quad
    C\ell_{6,0}\simeq M_4(\bH),\quad
    C\ell_{7,0}\simeq M_8(\bC),\quad
    C\ell_{8,0}\simeq M_{16}(\bR)
\end{align*}
and $C\ell_{n+8,0}\simeq C\ell_{n,0}\otimes_{\bR}C\ell_{8,0}$. We proceed as in the case of studying $C\ell_{0,n}$-spinors. Thus, suppose that $S_{i,0}$ are irreducible real representations of $C\ell_{i,0}$ for $1\leq i\leq 4$. As before, the Clifford algebra as well as the intertwiner algebras are symmetric and thus there are two versions of a given irreducible module
\[
    c_{i,0}^L:\bR^i\rightarrow\End^L_{\bK_{i,0}}\lp S_{i,0}\rp,\quad c^R_i:\bR^i\rightarrow\End^R_{\bK_{i,0}}\lp S_{i,0}\rp.
\]
We further remark that $\End_{C\ell_{i,0}}\lp S_{i,0}\rp\simeq \bR$ for $i=1,2$, $\End_{C\ell_{3,0}}\lp S_{3,0}\rp\simeq \bC$, and $\End_{C\ell_{4,0}}\lp S_{4,0}\rp\simeq\bH$. Lastly, just as in the $(n,0)$-theory, to obtain the other irreducible representation of $C\ell_{1,0}$, we compose $c_1$ with $-1$, thus obtaining $-S_{1,0}$ which is distinct as a module from $S_{1,0}$.\par
When $n=0\lp\text{ mod }4\rp$, a real spinor module $M_{n,0}$ is $\bH-\bZ_2$-graded by the volume element
\[
    \boldsymbol{\nu}:=e_1\cdots e_n, \quad M^{\pm}_{n,0}=\lp 1\pm\boldsymbol{\nu}\rp M_{n,0}.
\]
We use this $\bZ_2$-grading on $S_{4,0}$ to construct $S_{8,0}:=S_{4,0}\hat{\otimes}_{\bH}S_{4,0}$ and take
\begin{align*}
    c_{8k,0}:\bR^4\oplus\bR^4&\longrightarrow \End_{\bR}\lp S_{8,0}\rp\\
    \lp u,v\rp&\longmapsto c^R_{4,0}(u)\hat{\otimes}_{\bH}I_{S_{4,0}}+I_{S_{4,0}}\hat{\otimes}_{\bH}c^L_{4,0}(v).
\end{align*}
From here the construction of spinor modules in general $(n,0)$-dimensions is a direct translation of the Euclidean side. We write below the spinor modules and leave to the reader to check the details for themself.
\begin{align*}
    S_{8k,0}&:=\lp S_{8,0}\rp^{\hat{\otimes}^k_{\bR}},\\
    S_{8k+r,0}&:=S_{8k,0}\otimes_{\bR}S_{r,0},\quad 1\leq r\leq 3,\\
    S_{8k+4,0}&:=S_{8k,0}\hat{\otimes}_{\bR}S_{4,0},\\
    S_{8k+5,0}&:=S_{8k+4,0}\otimes_{\bR}S_{1,0},\\
    S_{8k+6,0}&:=S_{8k+4,0}\otimes_{\bR}S_{2,0},\\
    S_{8k+7,0}&:=S_{8k+4,0}\otimes_{\bC}S_{3,0}.
\end{align*}
Whenever there are factors of $S_{1,0}$, replacing by $-S_{1,0}$ gives a distinct irreducible module.
\subsection*{The General Case}
Suppose that $C\ell_{i,i}\circlearrowright S_{i,i}$ is a family of $\bR-\bZ_2$-graded irreducible $C\ell_{i,i}$ modules, $i\geq 1$. It is a theorem that in this case, $S_{i,i}$ is the unique up to isomorphism module for $C\ell_{i,i}$, and we will show below that this module is naturally $\bR-\bZ_2$-graded, with intertwiner algebra is isomorphic to $\bR$. Given a general $n=r+s$, we choose the maximal $i\geq 0$ such that $(r,s)-(i,i)$ lies within the first quadrant. In this case either $(r,s)-(i,i)=(a,0)$ or $(r,s)-(i,i)=(0,b)$. In the first case
\begin{align*}
    c_{r,s}:\bR^{2i}\oplus\bR^a&\longrightarrow \End^R_{\bK_{(a,0)}}\lp S_{i,i}\otimes_{\bR} S_{a,0}\rp\\
    (u,v)&\longmapsto c_{i,i}(u)\otimes_{\bR}I_{S_{a,0}}+I_{S_{i,i}}\hat{\otimes}_{\bR}c^R_{a,0}(v)
\end{align*}
gives an irreducible representation of $C\ell_{r,s}$.
In the second case
\begin{align*}
    c_{r,s}:\bR^{2i}\oplus\bR^b&\longrightarrow \End^R_{\bK_{(0,b)}}\lp S_{i,i}\otimes_{\bR} S_{0,b}\rp\\
    (u,v)&\longmapsto c_{i,i}(u)\otimes_{\bR}I_{S_{0,b}}+I_{S_{i,i}}\hat{\otimes}_{\bR}c^R_{0,b}(v)
\end{align*}
gives an irreducible representation of $C\ell_{r,s}$. As before, anytime a $S_{1,0}$ or $S_{0,3}$ factor appear within the tensor product, a simple swap to the negative representation gives the other distinct irreducible representation of $C\ell_{r,s}$. This completes the general recipe for cooking up all possible modules 
\section{A Choice of Irreducible Family}
Staying in line with the quaternic multi-vector approach we developed for the $(n,0)$ representation theory, we will construct a family of irreducible representations based on $\bR$, $\bC$ and $\bH$ multi-vectors. 
\subsection{The Signature $(n,0)$-case}
In dimension $(1,0)$, the linear map
\begin{align*}
    c_{1,0}:\bR&\longrightarrow \End_{\bR}\lp \bR\rp\\
    x&\longmapsto \lp y\mapsto x\cdot y\rp
\end{align*}
gives a $C\ell_{1,0}\circlearrowright \bR$ irreducible action.\par
In dimension $(2,0)$, if $e_1,e_2$ denote a $g_{2,0}$ orthonormal basis of $\bR^2$, and $f_1$ a spanning vector for $\bR$, then the operators on $\wedge_{\bR}\bR$
\begin{align*}
    c_{2,0}(e_1)&=f_1\wedge_{\bR}+\iota_{f_1}\\
    c_{1,0}(e_2)&=\epsilon
\end{align*}
where $\epsilon$ denotes the grading-by-degree operator on $\wedge_{\bR}\bR$. The $\bR$-linear extension of $c_{2,0}$ to $\bR^2$ gives the irreducible representation $C\ell_{2,0}\circlearrowright \wedge_{\bR}\bR$.\par
In dimension $(3,0)$, if $e_1,e_2,\text{ and }e_3$ denote a $g_{3,0}$ orthonormal basis of $\bR^3$, and if $f_1$ denotes a spanning vector for $\bC$ with Hermitian dual $\overline{f}_1\in\bC^*$, then the complex linear operators on $\wedge_{\bC}\bC$ given by
\begin{align*}
    c_{3,0}(e_1)&=f_1\wedge_{\bC}+\iota_{\overline{f}_1}\\
    c_{3,0}(e_2)&=if_1\wedge_{\bC}-i\iota_{\overline{f}_1}\\
    c_{3,0}(e_3)&=\epsilon
\end{align*}
when $\bR$-linearly extended give $C\ell_{3,0}\circlearrowright\wedge_{\bC}\bC$ irreducible action.\par
In dimension $(4,0)$, we use our $(0,4)$ irreducible quaternic module $\wedge_{\bH}\bH$, and treating $\bR^4=\bH$, we take as generating linear map
\begin{align*}
    c_{4,0}:\bH&\longmapsto\End^R_{\bH}\lp\wedge_{\bH}\bH\rp\\
    q&\longmapsto \varepsilon^L_q+\iota^L_q
\end{align*}
which gives the irreducible action $C\ell_{4,0}\circlearrowright\wedge_{\bH}\bH$.
\subsection{The Signature $(i,i)$-case}
Let $\lp\bR^i\rp^*$ denote the $\bR$-dual space of $\bR^i$. Then the vector space $\bR^i\oplus\lp\bR^i\rp^*$ has a natural metric given by
\[
    g\lp \lp x,\omega\rp,\lp y,\tau\rp\rp:=\omega(y)+\tau(x),\quad \lp x,\omega\rp,\lp y,\tau\rp\in\bR^i\oplus\lp\bR^i\rp^*
\]
which is signature $(i,i)$. The linear map
\begin{align*}
    c_{i,i}:\bR^i\oplus\lp\bR^i\rp^*&\longrightarrow\End_{\bR}\lp\wedge_{\bR}\bR^i\rp\\
    \lp x,\omega\rp&\longmapsto x\wedge_{\bR}-\iota_{\omega}
\end{align*}
generates the irreducible $C\ell_{i,i}\circlearrowright\wedge_{\bR}\bR^i$. Moreover, the representation is clearly $\bZ_2$-graded by using the even/odd grading on multivectors. An application of these spinors modules can be seen in \cite{Per} where an algebraic proof of the Atiyah-Singer index theorem is presented and generalized to the case of foliated Heisenberg manifolds in \cite{PR}.

\end{document}